\documentclass[11pt]{amsart}
\usepackage{amsmath}
\usepackage{}
%test
%\setlength{\parindent}{16pt}
%\setlength{\parskip}{10pt plus 1pt minus 2pt}
%\setlength{\baselineskip}{8pt plus 2pt minus 1pt}
%\linespread{1.6}
%\renewcommand{\arraystretch}{1.2}
%\setcounter{tocdepth}{1}
\tolerance=500
\setlength{\emergencystretch}{3em}
\usepackage[T1]{fontenc}
\usepackage{lmodern}
\usepackage{ifthen}
\usepackage{amsfonts}
\usepackage{amsxtra}
\usepackage{amssymb}
\usepackage{amsthm}
\usepackage{array}
\usepackage[margin=1in]{geometry}
\usepackage{xcolor}
\definecolor{cite}{rgb}{0.50,0.00,1.00}
\definecolor{url}{rgb}{0.00,0.50,0.75}
\definecolor{link}{rgb}{0.00,0.00,0.50}
\usepackage[colorlinks,linkcolor=link,urlcolor=url,citecolor=cite,pagebackref,breaklinks]{hyperref}
\usepackage{mathtools}
\usepackage{mathrsfs}
\usepackage[all]{xy}
\usepackage[lite,abbrev,msc-links]{amsrefs}

\makeindex

\theoremstyle{definition} %±êÌâÓë±àºÅÎªºÚÌå, ÕýÎÄÎªÕý³£×ÖÌå
\newtheorem{Unity}{Unity}[section] %\newtheorem{¶¨Àí»·¾³Ãû}{±êÌâ}[Ö÷¼ÆÊýÆ÷Ãû]
\newtheorem*{defn*}{Definition} %\newtheorem*{¶¨Àí»·¾³Ãû}[ÒÑ¶¨Òå¶¨Àí»·¾³Ãû]{±êÌâ} ÊÖ¶¯±àºÅ, ²»×Ô¶¯±àºÅ
\newtheorem{defn}[Unity]{Definition} %\newtheorem{¶¨Àí»·¾³Ãû}[ÒÑ¶¨Òå¶¨Àí»·¾³Ãû]{±êÌâ} Óëµ±Ç°»·¾³¹²ÓÃÍ¬Ò»¸öÐòºÅ¼ÆÊýÆ÷

\theoremstyle{plain} %±êÌâÓë±àºÅÎªºÚÌå, ÕýÎÄÎªÐ±Ìå
\newtheorem*{thm*}{Theorem}
\newtheorem{thm}[Unity]{Theorem}
\newtheorem{prop}[Unity]{Proposition}
\newtheorem*{cor*}{Corollary}
\newtheorem{cor}[Unity]{Corollary}
\newtheorem{lem}[Unity]{Lemma}
\newtheorem{conj}[Unity]{Conjecture}

\newtheorem{exmp}[Unity]{Example}
\theoremstyle{remark} % ±êÌâÓë±àºÅÎªÐ±Ìå, ÕýÎÄÎªÕý³£×ÖÌå
\newtheorem*{rmk*}{Remark}
\newtheorem{rmk}[Unity]{Remark}

%ÕÂ½ÚÃüÁî(´Ó´óµ½Ð¡):\part \chapter \section \subsection \subsubsection \paragraph \subparagraph
\numberwithin{Unity}{section}%\numberwithin{¼ÆÊýÆ÷}{Ö÷¼ÆÊýÆ÷}

\newcommand{\spec}{\textrm{Spec}}

\begin{document}
\title[Strong non-vanishing and strong non-freeness on $n$-Raynaud surfaces]{Strong non-vanishing of cohomologies and strong non-freeness of adjoint line bundles on $n$-Raynaud surfaces}

\author[Yongming Zhang]{Yongming Zhang}
\email{zhangym97@mail.sysu.edu.cn}
\address{School of Science, Sun Yat-sen University(Shenzhen Campus), Shenzhen,
518107, P. R. of China}
\maketitle
\begin{abstract}
We begin by formally defining n-Tango curves and n-Raynaud surfaces. Our investigation then focuses on the pathological behaviors exhibited by n-Raynaud surfaces. As a direct corollary of this analysis, we present a concise disproof of Fujita's conjecture for surfaces in positive characteristics.
\end{abstract}

\section{Introduction}
%This paper can be seen as a continuance of those papers of Pathologies
In \cite{Ra78}, Raynaud constructed a smooth projective surface $X$ over a field of positive characteristic, equipped with an ample line bundle $\mathcal {L}$ satisfying $H^1(X,\mathcal {L}^{-1})\neq0$, thereby providing the first counterexample to the Kodaira vanishing theorem in positive characteristic. As we know, vanishing theorems traditionally play a crucial role in the study of the following celebrated conjecture, originally formulated by Fujita (cf. \cite{Fu85}) in characteristic zero:
\begin{conj}[Fujita's conjecture]\label{Fujita-conj}
Let $X$ be a smooth projective variety of dimension $n$ over an algebraically closed field $\textbf{k}$ and $A$ an ample divisor on $X$. Then:
\begin{enumerate}
\item  for $m\geq n+1$, the adjoint linear system $|K_X + mA|$ is base point free and
\item  for $m\geq n+2$, the adjoint linear system $|K_X + mA|$ is very ample.
\end{enumerate}
\end{conj}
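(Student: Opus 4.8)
The plan is to treat both parts uniformly through the multiplier-ideal and Nadel vanishing method, which reduces each separation statement to the construction of an auxiliary $\mathbb{Q}$-divisor with prescribed singularities. For base point freeness, fix a point $x\in X$; it suffices to exhibit a global section of $\mathcal{O}_X(K_X+mA)$ not vanishing at $x$. I would manufacture an effective $\mathbb{Q}$-divisor $D\sim_{\mathbb{Q}} cA$ with $c<m$ whose multiplier ideal $\mathcal{J}(X,D)$ agrees with the maximal ideal $\mathfrak{m}_x$ near $x$ and is trivial away from $x$, so that $x$ is an isolated non-klt point of the pair $(X,D)$. Granting such a $D$, Nadel vanishing gives
\[
H^1\bigl(X,\mathcal{O}_X(K_X+mA)\otimes\mathcal{J}(X,D)\bigr)=0,
\]
since $(K_X+mA)-(K_X+D)\sim_{\mathbb{Q}}(m-c)A$ is ample; the associated long exact sequence then yields a surjection $H^0(X,K_X+mA)\twoheadrightarrow \mathcal{O}_X(K_X+mA)\otimes\textbf{k}(x)$, and any preimage of a nonzero element is the section we want.

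The heart of the argument is the construction of $D$, and this is where the hypothesis $m\ge n+1$ must be spent. First I would use the ampleness of $A$, via asymptotic Riemann--Roch, to build in a suitable multiple $|kA|$ a divisor of large multiplicity at $x$ — of order $k(A^n)^{1/n}$ — so that, when the volume $A^n$ is large, a small rational multiple already fails to be klt at $x$. The resulting non-klt locus need not be a point, so the second step is to cut it down: using the tie-breaking technique together with Kawamata's subadjunction theorem, I would restrict to the minimal log canonical centre $W$ passing through $x$ and induct on $\dim W$, at each stage trading a fraction of the ample class for a reduction in the dimension of the centre until only $x$ remains. Bookkeeping the total coefficient $c$ accumulated through this process, and verifying $c<m$, is precisely what produces the numerical bound.

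For part (2) the same scheme applies with two adjustments. To separate two points $x,y$, or a tangent direction at $x$, one constructs $D$ so that $(X,D)$ is non-klt and isolated at both loci simultaneously; imposing the second condition consumes one further unit of the ample class, which accounts for the stronger hypothesis $m\ge n+2$. The surjectivity furnished by Nadel vanishing, now onto $\mathcal{O}_X(K_X+mA)\otimes\bigl(\textbf{k}(x)\oplus\textbf{k}(y)\bigr)$ or onto the length-two jet scheme at $x$, then yields very ampleness.

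The genuine obstacle I anticipate is the positivity estimate inside the inductive cutting-down step: controlling the volume of the ample class restricted to the log canonical centre $W$, and guaranteeing that enough of $A$ survives each reduction, grows harder as $\dim W$ increases, and it is exactly these higher-dimensional volume bounds that the method fails to supply uniformly in $n$. A second, structural caveat is that the whole argument rests on Nadel vanishing, hence on Kawamata--Viehweg vanishing for $\mathbb{Q}$-divisors; this input holds when $\textbf{k}$ has characteristic zero but, as Raynaud's counterexample to Kodaira vanishing recalled above already signals, it can fail in positive characteristic, so the plan outlined here secures the statement only under that restriction on $\textbf{k}$.
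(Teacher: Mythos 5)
You are attempting to prove a statement that the paper labels a conjecture and never proves --- and for good reason: the paper's main content is a \emph{disproof} of it in positive characteristic. Theorem \ref{main} and its corollary (following \cite{GZZ}) produce, for every $r>0$, a smooth projective surface $X$ over an algebraically closed field of characteristic $p>0$ together with an ample divisor $A$ such that $|K_X+rA|$ has base points; taking $r\geq n+1=3$ contradicts part (1) of Conjecture \ref{Fujita-conj} as stated, since the statement allows $\textbf{k}$ of arbitrary characteristic. So no argument can establish the statement in the generality in which it is posed, and the caveat in your final paragraph --- that Nadel vanishing, hence your entire scheme, is available only in characteristic zero --- is not a side remark but a fatal restriction: the $n$-Raynaud surfaces constructed in this very paper are counterexamples to what you set out to prove.

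Even if one restricts to characteristic zero, your outline does not close. The construction of the cutting divisor $D$ via high multiplicity at $x$, tie-breaking, and Kawamata subadjunction along minimal log canonical centres is precisely the Angehrn--Siu method, and the coefficient bookkeeping it supports yields base point freeness only for roughly $m\geq\frac{1}{2}n(n+1)+1$, a quadratic bound, not the linear bound $m\geq n+1$ demanded here. You concede this yourself when you write that the higher-dimensional volume estimates needed to make each inductive step cost only one unit of $A$ are exactly what ``the method fails to supply uniformly in $n$''; with that admission, the verification $c<m$ on which your Nadel-vanishing surjection depends is unsubstantiated for $m=n+1$. Indeed Fujita's conjecture remains open in characteristic zero beyond low dimensions (freeness is known for $n\leq 5$ by work of Reider, Ein--Lazarsfeld, Kawamata, and Ye--Zhu), so no correct proof along these lines could have been expected. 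In sum, the proposal proves neither the statement as written, which is false in characteristic $p$ by this paper's own results, nor its characteristic-zero restriction, for which your method delivers only quadratic bounds.
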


While vanishing theorems are often used in proofs of Fujita's conjecture, they are not strictly necessary. For instance, the conjecture holds for quasi-elliptic surfaces in positive characteristic, even though Kodaira vanishing fails in this setting. This observation suggested that Fujita's conjecture might still hold in positive characteristic despite the failure of Kodaira vanishing, motivating substantial work on alternative techniques for producing global sections of adjoint bundles in this setting. However, contrary to expectations, the author and collaborators constructed explicit counterexamples to Fujita's conjecture in positive characteristic in \cite{GZZ}, revealing that the absence of vanishing theorems can fundamentally undermine the expected behavior of adjoint linear systems.

In this paper, we first formally introduce the definitions of $n$-Tango curves and $n$-Raynaud surfaces. And then we investigate certain pathological properties of an $n$-Raynaud surface, which are determined by its associated vector bundle $\mathcal {E}$ on the base $n$-Tango curve.

%Our results reveal that as $n$ increases, the surface exhibits more severe pathologies. 
Firstly, we find that the strong non-freeness of adjoint line bundles is more deeply connected to the structure of $\mathcal{E}$ (Theorem \ref{main1}) apart from the number $n$. Specifically, there should exist a parameter space of quotients "$\mathcal {E}\twoheadrightarrow \mathcal {L}_0$" of dimension at least $1$, where $\mathcal {L}_0$  is a line bundle of sufficiently small degree. While in \cite{GZZ} the strong non-freeness of the adjoint line bundle arises from the non-surjectivity of certain connecting morphisms in exact sequences.
\begin{thm}(Theorem \ref{main})\label{main1}
Let $C$ be an $n$-Tango curve with an associated vector bundle $\mathcal{E}$ of rank $2$, and let $\psi:X\stackrel{l:1}\rightarrow\mathbb{P}(\mathcal {E})$ be the corresponding $n$-Raynaud surface constructed in section \ref{preliminary}. For any integer $m=lq+r \in \mathbb{N}^+$ where $0\leq r<l$,
suppose that there exists a surjective morphism $\sigma_0:\mathcal{E}\twoheadrightarrow \mathcal {L}_0$ to a line bundle $\mathcal {L}_0$  on $C$ satisfying
\begin{enumerate}
  \item [(1)] $\dim H^0(C,\mathcal{E}^{\vee}\otimes \mathcal {L}_0)\geq2$ and
  \item [(2)] $ H^0(C,\omega_C\otimes {\mathcal {L}_0^{-q}(-Q)})\neq 0$ for some divisor $Q$ of positive degree on $C$.
\end{enumerate}
Then $q< p^n$ and there exists a nonempty open subset $C_0\subset C$ such that for every point $P\in C_0$ the ample line bundle $\mathcal{O}_X(m\tilde{S} + \phi^*(Q+P))$ has base point $\phi^{-1}(P)\cap \widetilde{T}$ on $X$.

In particular, if the condition (2) is replaced by
 \begin{enumerate}
   \item [(2*)] $H^0(C,\omega_C\otimes {\mathcal {L}_0^{-(p^n-1-d)}(-(p^n+l)N-Q)})\neq 0$ for some divisor $Q$ of positive degree,
 \end{enumerate}
 then  the same conclusion holds for the adjoint line bundle $\mathcal{O}_X(K_X +r\tilde{S} + \phi^*(Q+P))$.
\end{thm}
Secondly, as a corollary we provide a concise disproof of Fujita's conjecture (Corollary \ref{fujita1}), bypassing the extensive computations of those connecting map required in \cite{GZZ}.
\begin{cor}(\cite[Theorem 1.2] {GZZ}\label{fujita1})
For any integer $r>0$, there exists a smooth projective surface $X$ with an ample divisor $A$   such that the adjoint linear system
$$|K_X+r A |$$ has base points.
\end{cor}
Finally, we establish a strong form of Kodaira non-vanishing in Theorem \ref{strongnonvanishing1}, whose behavior only depends on the number $n$ or the degree of the associated line bundle $\mathcal{L}$ (or $\mathcal{N}$) on the base curve.
\begin{thm}[Theorem \ref{strongnonvanishing}]\label{strongnonvanishing1}
For any integer $m>0$, there exists a smooth projective surface $X$ and an ample line bundle $\mathcal{H}$ on $X$ such that $H^1(X,\mathcal{H}^{-p^m})\neq0$
\end{thm}

%\textbf{Notations:}
%\begin{itemize}
%    \item
%%	\item A variety $X$ over a field $k$ is a reduced scheme of finite type over $k$. In particular we do not assume that $X$ is irreducible.
%%	\item %$\omega_{X/Y}$ stands for the relative dualizing sheaf when $X\rightarrow Y$ is a Cohen-Macaulay morphism. When $Y=\textrm{Spec}(k)$, we use $\omega_X$ for simplicity. For an arbitrary variety $Z$ over $\textrm{Spec}(k)$,  $\omega^\bullet_{Z/k}$ stands for the dualizing complex of $Z$ over $k$.
%\end{itemize}

\textbf{Acknowledgement:} The author is deeply grateful to his advisor, Professor Xiaotao Sun, and postdoctoral advisor, Professor Meng Chen, for their continuous encouragement and guidance in exploring mathematical problems. Special thanks are also due to Yifei Chen and Jie Shu for their useful discussions to this work.
\section{$n$-Tango curve and $n$-Raynaud surface}\label{preliminary}
Throughout this paper, we fix an algebraically closed field $\mathbf{k}$ of characteristic $p>0$
\subsection{$n$-Tango curve}\label{n-Tango}
Let $C$ be a smooth projective curve defined over $\mathbf{k}$ with the function field $K(C)$. We denote by
$K(C)^{p}=\left\{f^{p} | f \in K(C)\right\}$ the subfield of $p$-th powers. Let $F$ denote the absolute Frobenius morphism. In \cite{Ra78,Tango72} the following exact sequence
$$0 \rightarrow  \mathcal{O}_{C}  \rightarrow F_{*}\mathcal{O}_{C}\rightarrow \mathcal{B}^{1}\rightarrow 0$$ is used to construct Tango curves,
where $\mathcal{B}^{1}$ is the sheaf of exact $1$-forms on $C$.
More generally, we consider the following exact sequence
$$0 \rightarrow  \mathcal{O}_{C}  \rightarrow F_{*}^{n}\mathcal{O}_{C}\rightarrow F_{*}^{n}\mathcal{O}_{C}/\mathcal{O}_{C} \rightarrow 0$$
to give the definition of $n$-Tango curve.
\begin{defn}
A smooth projective curve $C$ over $\mathbf{k}$ is called an \textbf{$n$-Tango curve} if it satisfies the following conditions.
\begin{enumerate}
  \item There exists a rational function $f \in K(C) \backslash K(C)^{p}$ such that $(\mathrm{d} f)=p^n D$ for some divisor $D$ on $C$ with $\deg D>0$ and some integer $n>0$. Denote by the associated line bundle $\mathcal {L}=\mathcal{O}_{C}(D)$, then $\omega_C\simeq \mathcal {L}^{p^n}$ and we have a nonzero section $s_0\in H^0(C, F_*^{n-1}\mathcal{B}^{1}(-D))$.
  \item Moveover, we assume that this section lifts to a section $s\in H^0(C,(F_*^{n}\mathcal {O}_C/\mathcal {O}_C)(-D))$ via the natural quotient map $F_*^{n}\mathcal {O}_C/\mathcal {O}_C\twoheadrightarrow F_*^{n-1}\mathcal{B}^{1}$.
\end{enumerate}
A triple $(C,f,D)$ satisfying these conditions is called an \textbf{$n$-Tango data}.
\end{defn}
\subsection{Local analysis and vector bundle construction}
Let $(C,f,D)$ be an $n$-Tango data, then by definition we have a nonzero section $s_0\in H^0(C, F_*^{n-1}\mathcal{B}^{1}\otimes \mathcal {L}^{-1})$.
Take an affine open covering $C=U_1\cup U_2$ such that $\mathcal {L}|_{U_i}$ is trivial with the generators $\eta_i \in H^0 (U_i ,\mathcal {L}|_{U_i})$ and the transition relation $\eta_1=\alpha\eta_2$ for some $\alpha \in \Gamma(U_1\cap U_2,\mathcal {O}_C)^*$. Via the nature morphisms $$F_*^{n}\mathcal {O}_C\stackrel{\psi}{\twoheadrightarrow }F_*^{n}\mathcal {O}_C/\mathcal {O}_C\stackrel{\phi}{\twoheadrightarrow} F_*^{n-1}\mathcal{B}^{1},$$ there exist two regular functions $z_i\in \Gamma(\mathcal {O}_C,U_i)$ such that $s_0|_{U_i}=\phi\circ\psi(\sqrt[p^n]{z_i})\otimes \frac{1}{\eta_i}$ and the compatibility condition yields $\phi\circ\psi(\sqrt[p^n]{z_1})=\alpha\phi\circ\psi(\sqrt[p^n]{z_2})$.
By condition (2) $s_0$ lifts to $s\in H^0(C,(F_*^{n}\mathcal {O}_C/\mathcal {O}_C)\otimes \mathcal {L}^{-1})$ via $\phi$, giving $s|_{U_i}=\psi(\sqrt[p^n]{z_i})\otimes \frac{1}{\eta_i}$ and the relation $\psi(\sqrt[p^n]{z_1})=\alpha\psi(\sqrt[p^n]{z_2})$. Hence we have the relation
$$\sqrt[p^n]{z_1}=\alpha\sqrt[p^n]{z_2}+\beta$$ for some $\beta\in\Gamma(U_1\cap U_2,\mathcal {O}_C)$.

Moreover, we get a sub-sheaf $\mathcal{L}\hookrightarrow F_*^{n}\mathcal {O}_C/\mathcal {O}_C$ and then a locally free sub-sheaf of rank two $\mathcal{E}:=\psi^{-1}(\mathcal{L})\subset F_*^{n}\mathcal {O}_C$ from the diagram
$$
\xymatrix{
  0  \ar[r]&\mathcal {O}_C\ar[r] & F_*^{n}\mathcal {O}_C \ar[r]^{\psi} &F_*^{n}\mathcal {O}_C/\mathcal {O}_C   \ar[r]^{ } & 0  \\
  0  \ar[r] & \mathcal {O}_C \ar@{=}[u] \ar[r] &\mathcal{E} \ar@{^{(}->}[u] \ar[r]^{} & \mathcal{L}\ar@{^{(}->}[u] \ar[r] & 0.
   }
$$
Locally, $$\mathcal{E}|_{U_i}=\mathcal {O}_{U_i}\cdot 1\oplus\mathcal {O}_{U_i}\cdot \sqrt[p^n]{z_i},$$ with translation relation  $\sqrt[p^n]{z_1}=\alpha\sqrt[p^n]{z_2}+\beta$ for some $\beta\in\Gamma(U_1\cap U_2,\mathcal {O}_C)$. I.e. the vector bundle $\mathcal{E}$ is defined by the transition matrix $\left(
   \begin{array}{cc}
       1 & \beta \\
     0 & \alpha \\
    \end{array}
  \right)\in GL(2,\mathcal {O}_{U_1\cap U_2})$ .

From the above argument we have the following relation of rational functions $$z_1=\alpha^{p^n}z_2+\beta^{p^n}$$ with $z_i\in \Gamma(\mathcal {O}_C,U_i)$, $\alpha \in \Gamma(U_1\cap U_2,\mathcal {O}_C)^*$ and $\beta\in\Gamma(U_1\cap U_2,\mathcal {O}_C)$.
Taking differentials, $\mathrm{d}z_1=\alpha^{p^n}\mathrm{d}z_2$, implying $\omega_C$ contains a sub-sheaf locally generated by $\mathrm{d}z_i$, isomorphic to $\mathcal {L}^{p^n}$. Since $\omega_C\simeq \mathcal {L}^{p^n}$, $\mathrm{d}z_i$ generates $\omega_C$ locally, making $z_i$ a local parameter on $U_i$. Consequently, the nature map $\emph{Sym}^{m}(\mathcal{E})\rightarrow F_*^{n}\mathcal {O}_C$ is an embedding for $m<p^n$ and an isomorphism for $m=p^n$.

In summary, we establish the following proposition.
\begin{prop}\label{TC}
The following conditions are equivalent for an $n$-Tango data $(C,f,D)$:
\begin{enumerate}
  \item there exists an open affine cover $C=U_1\cup U_2$ and relation of rational functions $$z_1=\alpha^{p^n}z_2+\beta^{p^n}$$ where $z_i\in \mathcal {O}_{U_i}$, $\alpha \in \Gamma(U_1\cap U_2,\mathcal {O}_C)^*$ and $\beta\in\Gamma(U_1\cap U_2,\mathcal {O}_C)$ and the canonical sheaf $\omega_C$ is locally generated by $\mathrm{d}z_i$ on $U_i$.
  \item There is a line bundle $\mathcal{L}$ satisfies $\omega_C\simeq \mathcal {L}^{p^n}$, and there exists a rank $2$ vector bundle $\mathcal{E}\subset F_*^{n}\mathcal {O}_C$ fitting into the exact sequence:
      $$0\rightarrow\mathcal{O}_C\rightarrow\mathcal{E}\rightarrow \mathcal{L}\rightarrow0$$
  with transition matrix $\left(
   \begin{array}{cc}
       1 & \beta \\
     0 & \alpha \\
    \end{array}
  \right)\in GL(2,\mathcal {O}_{U_1\cap U_2})$ over $U_1\cap U_2$.
\end{enumerate}

\end{prop}
\subsection{An motivation for defining $n$-Tango curve}\label{ss2.3}
Given an $n$-Tango curve $C$ with an associated section $s_0\in H^0(C, F_*^{n-1}\mathcal{B}^{1}\otimes \mathcal{L}^{-1})$ and its lift $s\in H^0(C,(F_*^{n}\mathcal {O}_C/\mathcal {O}_C)(-D))$. Let's consider the following diagram:
$$
\xymatrix{
  0  \ar[r]^{ } & (F_*^{n-1}\mathcal {O}_C/\mathcal {O}_C)\otimes \mathcal{L}^{-1}  \ar[r]^{ } &(F_*^{n}\mathcal {O}_C/\mathcal {O}_C)\otimes \mathcal{L}^{-1}   \ar[r]^{ } & F_*^{n-1}\mathcal{B}^{1}\otimes \mathcal{L}^{-1}   \ar[r]^{ } & 0  \\
  0  \ar[r] &  F_*^{n-1}\mathcal {O}_C \otimes \mathcal{L}^{-1}  \ar@{->>}[u] \ar[r]^{ } &F_*^{n}\mathcal {O}_C\otimes \mathcal{L}^{-1} \ar@{->>}[u] \ar[r]^{} & F_*^{n-1}\mathcal{B}^{1}\otimes \mathcal{L}^{-1} \ar@{=}[u] \ar[r]^{} & 0 \\
  &\mathcal {O}_C\otimes \mathcal{L}^{-1}  \ar@{^{(}->}[u] \ar@{=}[r] &\mathcal {O}_C\otimes \mathcal{L}^{-1} . \ar@{^{(}->}[u]  &  &
   }
$$
The second exact column of the above diagram induces the long exact sequence
$$0\rightarrow  H^0(C,(F_*^{n-1}\mathcal {O}_C/\mathcal {O}_C)\otimes \mathcal{L}^{-1}) \stackrel{\delta}\rightarrow H^1(C, \mathcal{L}^{-1}) \stackrel{F^{*n}}\rightarrow H^1(C,F_*^{n}\mathcal {O}_C\otimes \mathcal{L}^{-1}) \rightarrow.$$
this yields a nonzero element $\delta(s)\in H^1(C, \mathcal{L}^{-1})$ satisfying $F^{*n}(\delta(s))=0$, which corresponds to the locally free sheaf $\mathcal{E}$ and will be used to construct the ruled surface in the next section; while from the second exact row we derive another exact sequence $$0\rightarrow  H^0(C, F_*^{n-1}\mathcal{B}^{1}\otimes \mathcal{L}^{-1}) \stackrel{\epsilon}\rightarrow H^1(C,F_*^{n-1}\mathcal {O}_C \otimes \mathcal{L}^{-1}) \stackrel{F^{*}}\rightarrow H^1(C,F_*^{n}\mathcal {O}_C\otimes \mathcal{L}^{-1}) \rightarrow\cdots.$$
Since $s\in H^0(C,(F_*^{n}\mathcal {O}_C/\mathcal {O}_C)\otimes \mathcal{L}^{-1})$ is a lift via the natural quotient $F_*^{n}\mathcal {O}_C/\mathcal {O}_C\twoheadrightarrow F_*^{n-1}\mathcal{B}^{1}$, we have $F^{*n-1}(\delta(s))=\epsilon(s)\neq0$. This confirms $n$ is the smallest integer such that $F^{*n}(\delta(s))=0$. Therefore, we obtain a locally free sheaf $\mathcal{E}$ which splits after an $n$-th pullback via Frobenius morphism (see subsection \ref{ruledsurface}).

\begin{rmk}\label{rmk1}
For $n=1$, the condition (2) is satisfied automatically by (1) recovering the standard definition of Tango curve (cf. \cite{Tango72,Ra78,Mu13}). When $n>1$, the condition (2) is essential: there exist such triples $(C,f,D)$ only satisfying the condition (1) but not the condition (2). %(see example \ref{non-Tangocurve}).
\end{rmk}
As in the base curve construction in \cite[section 2.2]{GZZ}, the following example is a slight modification of the example 1.3 in \cite{Mu13}. This construction originates from Gieseker's work in \cite{G71} for the case $e=n=1$ and $p=3$.
\begin{exmp}\label{Tangocurve}
Let $Q(X,Y)$ be a homogeneous polynomial in two variables of degree $e$ with nonzero coefficient of $Y^e$ and $C\subset \mathbb{P}^2=\textbf{Proj}\ k[X,Y,Z]$ be the curve defined by the homogeneous equation of degree $p^ne$:
$$Q(X^{p^n},Y^{p^n})-X^{p^ne-1}Y=Z^{p^ne-1}X.$$

The curve $C$ is smooth and intersects $X=0$ precisely at the point $\infty=[0:0:1]$ with multiplicity $p^ne$. Define the affine chart $$U_1:=C\setminus \infty =\textbf{Spec}\ k[y_1,z_1]/(Q(1,y_1^{p^n})-y_1-z_1^{p^ne-1})$$ where $y_1=\frac{Y}{X}$ and $z_1=\frac{Z}{X}$. On $U_1$, the relation
$-\mathrm{d}y_1=-(z_1)^{p^ne-2}\mathrm{d}z_1$ holds, so $\omega_C|_{U_1}$ is generated by $\mathrm{d}z_1$. The degree of the canonical divisor $\deg \omega_C=p^ne(p^ne-3)$, hence $(\mathrm{d}z_1)=p^ne(p^ne-3)\infty$. Let $D:=e(p^ne-3)\infty$ and $\mathcal {L}=\mathcal{O}_{C}(D)$. This defines a triple $(C,z_1,D)$, a sub-line bundle $\mathcal {L}^{p^{n-1}}\hookrightarrow\mathcal{B}^{1}$ and hence a nonzero section $s_0\in H^0(C, F_*^{n-1}\mathcal{B}^{1}\otimes \mathcal {L}^{-1})$.

 Let $U_2=C\cap \{Z\neq 0\}\subset C$ be an open affine chart containing $\infty$ defined by the equation $$Q(x^{p^n},y^{p^n})-x^{p^ne-1}y=x$$ where $y=Y/Z$ and $x=X/Z$. Differentiating both sides yields $-x^{p^ne-1}\mathrm{d}y=(1-yx^{p^ne-2})\mathrm{d}x$. Note that the special point $\infty$ is given by  $x=y=0$. Shrinking $U_2$ to a neighborhood of $\infty$ where $1-yx^{p^ne-2}\neq 0$, $\omega_C|_{U_2}$ is generated by $\mathrm{d}y$
and $y$ is a local parameter at $\infty$. The ideal $(x)=(y^{p^ne})\subset \mathcal {O}_{C,\infty}$ gives $$v_\infty(x)=v_\infty(y^{p^ne})=p^ne.$$
Define  $$z_2:=\frac{x^{p^ne-2}}{Q(x^{p^n},y^{p^n})y^{p^ne(p^ne-3)}}\cdot y,$$ whose differential is $$\mathrm{d}z_2=\frac{ x^{p^ne-2}(1+y x^{p^ne-2})}{Q(x^{p^n},y^{p^n})y^{p^ne(p^ne-3)}(1-yx^{p^ne-2})}\mathrm{d}y.$$
 It is easy to check that $$v_\infty(\frac{x^{p^ne-2}}{Q(x^{p^n},y^{p^n})y^{p^ne(p^ne-3)}})=v_\infty(\frac{ x^{p^ne-2}(1+y x^{p^ne-2})}{Q(x^{p^n},y^{p^n})y^{p^ne(p^ne-3)}(1-yx^{p^ne-2})})=0.$$
 So $\mathrm{d}z_2$ generates  $\omega_{C}|_{U_2}$ on sufficiently small $U_2$.
 % for example, such that $$\frac{x^{p^ne-2}}{Q(x^{p^n},y^{p^n})y^{p^ne(p^ne-3)}}\text{ and }\frac{ x^{p^ne-2}+y x^{2p^ne-4}}{Q(x^{p^n},y^{p^n})y^{p^ne(p^ne-3)}(1-yx^{p^ne-2})}\in \mathcal {O}_{C}(U_2)^*.$$
And the key relation
$$z_1-\frac{1}{Q(x^{p^n},y^{p^n})}=\frac{1}{x}-\frac{1}{Q(x^{p^n},y^{p^n})}=\frac{x^{p^ne-1}y}{x Q(x^{p^n},y^{p^n})}=y^{p^ne(p^ne-3)}z_2$$
simplifies to
$$z_1=(y^{e(p^ne-3)})^{p^n}z_2+(Q^{-1}(x,y))^{p^n}.$$
After shrinking $U_2$ to ensure $y^{e(p^ne-3)}\in \mathcal {O}(U_1\cap U_2)^*$ and $Q^{-1}(x,y)\in \mathcal {O}(U_1\cap U_2)$. The cover $C=U_1\cup U_2$ satisfies the rational function relation with $\omega_C$ locally generated by $\mathrm{d}z_1$ and $\mathrm{d}z_2$ on $U_1$ and $U_2$ respectively.
Therefore, triple $(C,z_1,D)$ is an $n$-Tango data by Proposition \ref{TC}.
\end{exmp}

\subsection{Ruled surface over $n$-Tango curve}\label{ruledsurface}

Let $C$ be an $n$-Tango curve with an associated divisor $D$ on $C$ and $\mathcal {L}=\mathcal {O}(D)$. And
let $s_0\in H^0(C, F_*^{n-1}\mathcal{B}^{1}\otimes \mathcal {L}^{-1})$ be the associated section, which lifts to a section $s\in H^0(C,(F_*^{n}\mathcal {O}_C/\mathcal {O}_C)\otimes \mathcal{L}^{-1})$. By subsection \ref{ss2.3}, we get an element $0\neq \delta(s)\in H^1(C, \mathcal {L}^{-1})$ satisfying $F^{*n}(\delta(s))=0$ and $F^{*n-1}(\delta(s))\neq0$.
So $\delta(s)\in H^1(C, \mathcal {L}^{-1})$ gives a non-trivial extension
$$0\rightarrow \mathcal{O}_{C}\rightarrow \mathcal {E}\rightarrow \mathcal {L}\rightarrow 0.\eqno(*),$$
and $n$ is the smallest integer such that
$$\xymatrix@C=0.5cm{
  0 \ar[r] & \mathcal{O}_{C} \ar[r] & F^{n*}\mathcal {E} \ar[r] & F^{n*}\mathcal {L} \ar[r]\ar@/_/[l]_{\tau} & 0 }
\eqno(**)$$
splits under the Frobenius pullback $F^{n*}$.
Setting $\mathcal {E}^{(p^n)}=F^{n*}\mathcal {E}$, consider the following diagram
$$\xymatrix{
  \mathbb{P}(\mathcal {E}) \ar@/_/[ddr]_{\pi} \ar@/^/[drr]^{F^n}
    \ar@{.>}[dr]|-{F_1}                   \\
   & \mathbb{P}(\mathcal {E}^{(p^n)}) \ar[d]^{\pi_1} \ar[r]_{F_2}
                      & \mathbb{P}(\mathcal {E}) \ar[d]_{\pi}    \\
   & C \ar[r]^{F^n}     & C.               }
\eqno(***)$$
Let $S\subseteq \mathbb{P}(\mathcal {E})$ be the section corresponding to the exact sequence ($*$). The splitting $\tau$ of ($**$) induces a section $T^{\prime}\subseteq \mathbb{P}(\mathcal {E}^{(p^n)})$ with $$\mathcal {O}(T^{\prime})=\mathcal {O}(1)_{\mathbb{P}(\mathcal {E}^{(p^n)})}\otimes\pi_1^{*}\mathcal {L}^{-p^n},$$ which is disjoint from the section $S^{\prime}=F_{2}^{-1}(S)$.
% and normal bundle $$N_{T^{\prime}/ \mathbb{P}(\mathcal {E}^{(p^n)})}=\pi_1^{*}\mathcal {L}^{-p^n}\mid_{T^{\prime}}$$.
Let $T=F_1^{-1}(T^{\prime})$ denote the (scheme-theoretic) inverse image of $T^{\prime}$ under the $n$-th relative Frobenius morphism $F_1$; local calculations show $T$ is a smooth curve which is called a multiple section. Then $T$ is disjoint with $S$ and  $$\mathcal {O}(T)=\mathcal {O}_{\mathbb{P}(\mathcal {E})}(p^n)\otimes\pi^{*}\mathcal {L}^{-p^n}.$$ %and normal bundle $$N_{T/ \mathbb{P}(\mathcal {E})}=\pi^{*}\mathcal {L}^{-p^n}\mid_{T}.$$
\subsection{$n$-Raynaud surface}
With the same notations as in the above subsections, we have $\mathcal {O}(S+T)=\mathcal {O}(p^n+1)\otimes\pi^*\mathcal {L}^{-p^n}$. Suppose there exists a positive integer $l$ such that $l\mid p^n+1$ and $l\mid \deg \mathcal {L}$. Let $\mathcal {L}=\mathcal {O}(lN)$ for some divisor $N$ on $C$, and denote   $$d=\frac{p^n+1}{l}.$$
Then $$\mathcal {O}(p^n+1)\otimes\pi^*\mathcal {L}^{-p^n}=\mathcal {M}^l$$
where $$\mathcal {M}=\mathcal {O}(d)\otimes\pi^*\mathcal {O}(-p^nN),$$
and the global section $$S+T\in \Gamma(\mathbb{P}(\mathcal {E}),\mathcal {M}^l)$$ defines an $l$-cyclic cover over $\mathbb{P}(\mathcal {E})$ branched along the divisor $S+T$:
$$\psi:X=\spec \bigoplus_{i=0}^{l-1}\mathcal {M}^{-i}\longrightarrow \mathbb{P}(\mathcal {E}).$$
Then $X$ is called \textbf{$n$-Raynaud surface}, and when $n=1$ it is the classical Raynaud surface.

Let $\tilde{S}$ and $\tilde{T}$ denote the reduced pre-images of the ramification curves $S$ and $T$ respectively, then $$\psi^*(T)=l\tilde{T},\ \psi^*(S)=l\tilde{S},\ \mathcal {O}(\tilde{S}+\tilde{T})=\psi^*(\mathcal {M})\text{ and }\mathcal {O}(\tilde{T})=\mathcal {O}(p^n\tilde{S})\otimes\psi^*\mathcal {O}(-p^nN).\eqno(\clubsuit)$$
Next, we list several properties of the $n$-Raynaud surface $X$. Denote the composition by $$\phi:X\stackrel{\psi}\longrightarrow\mathbb{P}(\mathcal {E})\stackrel{\pi}\longrightarrow C,$$
and by explicit computations we obtain the following:
\begin{prop}\label{}
Let $X$ be an $n$-Raynaud surface over an $n$-Tango curve $C$, with the notations established in this section we have
\begin{itemize}
  \item $(\tilde{S}^{2})=\frac{2 g-2}{p^n l}$;
  \item $\omega_{X}=\mathcal{O}_{X}((p^nl-l-p^n-1) \tilde{S}) \otimes \phi^{*} \mathcal {O}_{C}((p^n+l)N)$;
  \item when $(p,n,l)=(2,1,3)$ or $(p,n,l)=(3,1,2)$, $X$ is a quasi-elliptic surface and in all other cases $\omega_{X}$ is ample;
  \item the morphism $\phi: X \rightarrow C$ is a singular fibration, and every fibre $F$ has a cuspidal singularity at $F \cap \tilde{T}$ locally of the form $x^{l}=y^{p^n}$.
\end{itemize}
\end{prop}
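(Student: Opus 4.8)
The plan is to dispatch the four assertions in turn, using intersection theory on the ruled surface $\mathbb{P}(\mathcal{E})$, the ramification formula for the $l$-cyclic cover $\psi$, and a local analysis of the relative Frobenius near $T$. For the self-intersection I would first note that $\mathcal{O}(S)=\mathcal{O}(1)$, which drops out of subtracting the two displayed identities $\mathcal{O}(T)=\mathcal{O}(p^n)\otimes\pi^*\mathcal{L}^{-p^n}$ and $\mathcal{O}(S+T)=\mathcal{O}(p^n+1)\otimes\pi^*\mathcal{L}^{-p^n}$. The Grothendieck relation on $\mathbb{P}(\mathcal{E})$ (with $c_2(\mathcal{E})=0$ on a curve) then gives $S^2=\deg\mathcal{E}=\deg\mathcal{L}$, the last equality because $\deg\mathcal{E}=\deg\mathcal{O}_C+\deg\mathcal{L}$ from $(*)$. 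Since $\psi$ is finite of degree $l$ with $\psi^*S=l\tilde{S}$, the projection formula yields $l\,\tilde{S}^2=\psi_*(\tilde{S}\cdot\psi^*S)=S^2$, so $\tilde{S}^2=\deg\mathcal{L}/l$; finally $\omega_C\simeq\mathcal{L}^{p^n}$ forces $\deg\mathcal{L}=(2g-2)/p^n$, whence $\tilde{S}^2=(2g-2)/(p^nl)$.

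For the canonical bundle I would apply the Hurwitz formula for the tame cyclic cover (tame since $l\mid p^n+1$ gives $\gcd(l,p)=1$): $\omega_X=\psi^*\omega_{\mathbb{P}(\mathcal{E})}\otimes\mathcal{O}_X((l-1)(\tilde{S}+\tilde{T}))$, the ramification being $(l-1)$ times the reduced branch preimage. Substituting $\omega_{\mathbb{P}(\mathcal{E})}=\mathcal{O}(-2S)\otimes\pi^*(\det\mathcal{E}\otimes\omega_C)$ with $\det\mathcal{E}=\mathcal{L}=\mathcal{O}(lN)$ and $\omega_C=\mathcal{O}(p^nlN)$, pulling back by $\psi$ (so $\psi^*S=l\tilde{S}$ and $\psi^*\pi^*=\phi^*$), and eliminating $\tilde{T}$ through $\mathcal{O}(\tilde{T})=\mathcal{O}(p^n\tilde{S})\otimes\phi^*\mathcal{O}(-p^nN)$ from \eqref{seq1}, the bookkeeping collects the $\tilde{S}$-coefficient to $-2l+(l-1)+(l-1)p^n=p^nl-l-p^n-1$ and the base-divisor coefficient to $(p^n+1)l-(l-1)p^n=p^n+l$, which is the asserted formula.

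For the third item I would set $a:=p^nl-l-p^n-1=(p^n-1)(l-1)-2$ and read the canonical class numerically as $a\tilde{S}+(p^n+l)(\deg N)F$, using $\tilde{S}\cdot F=1$, $F^2=0$ and $\tilde{S}^2>0$. Adjunction $2p_a(F)-2=F^2+\omega_X\cdot F=a$ shows the fibres have arithmetic genus $(p^n-1)(l-1)/2$, equal to $1$ exactly when $a=0$, i.e. precisely for $(p,n,l)=(2,1,3)$ or $(3,1,2)$, the only solutions of $(p^n-1)(l-1)=2$ compatible with $l\mid p^n+1$; there $\omega_X=\phi^*\mathcal{O}((p^n+l)N)$ is a pullback with $\omega_X\cdot F=0$, and the genus-one fibration is quasi-elliptic. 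In every remaining case $a>0$, so $\omega_X\cdot F=a>0$ on fibres and $\omega_X\cdot\Gamma=a(\tilde{S}\cdot\Gamma)+(p^n+l)(\deg N)(\Gamma\cdot F)>0$ for each horizontal irreducible $\Gamma$ (using $\tilde{S}^2>0$ when $\Gamma=\tilde{S}$ and $\tilde{S}\cdot\Gamma\ge0$ otherwise), together with $\omega_X^2>0$; Nakai--Moishezon then gives ampleness.

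The last item is where I expect the real work to lie. Each fibre $F$ is the $l$-cyclic cover of the $\mathbb{P}^1$-fibre branched along $(S+T)|_F$, a divisor of degree $p^n+1$ that is reduced at $S\cap F$ but carries multiplicity $p^n$ at $T\cap F$ (as $T\cdot f=p^n$); over the first point the cover is smooth, and over the second the equation $x^{l}=y^{p^n}$ is an irreducible cusp since $\gcd(l,p^n)=1$. The delicate step is to produce honest local coordinates from the $n$-Tango relation $z_1=\alpha^{p^n}z_2+\beta^{p^n}$: I would use the transverse term $\beta^{p^n}$ to write the branch divisor near $T$ in the form $y^{p^n}+t(\cdots)$ with nonvanishing $t$-derivative, which simultaneously verifies that $X$ is smooth there (so that the $\omega_X$ above is the genuine canonical bundle) and that every fibre acquires exactly the cusp $x^{l}=y^{p^n}$ along $\tilde{T}$.
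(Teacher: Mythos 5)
The paper offers no proof of this proposition at all (it is stated and immediately followed by Lemma \ref{lem_zheng}; these are the $n$-arbitrary analogues of standard facts about Raynaud surfaces from \cite{Ra78}, \cite{Mu13}, \cite{Zhe17}), so there is nothing of the author's to compare against; judged on its own, your argument is correct and is the expected one. Subtracting the two displayed identities does give $\mathcal{O}(S)=\mathcal{O}(1)$, the Grothendieck relation gives $S^2=\deg\mathcal{E}=\deg\mathcal{L}=(2g-2)/p^n$ and then $\psi^*S=l\tilde{S}$ gives the first item; $l\mid p^n+1$ forces $\gcd(l,p)=1$, so the tame Hurwitz formula with ramification $(l-1)(\tilde{S}+\tilde{T})$, combined with $\mathcal{O}(\tilde{T})=\mathcal{O}(p^n\tilde{S})\otimes\phi^*\mathcal{O}(-p^nN)$, produces exactly the stated $\omega_X$; and your reduction of the third item to the sign of $a=(p^n-1)(l-1)-2$ correctly isolates $(p,n,l)=(2,1,3),(3,1,2)$ (noting, as you do, that $(p^n,l)=(2,2)$ is excluded by $l\mid p^n+1$). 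Two spots deserve tightening. In the Nakai--Moishezon step you should record that the fibres of $\phi$ are integral (which follows from your fourth item: the fibre is unibranch at $\tilde{T}$ and the restricted branch divisor has a reduced point at $S$, so the degree-$l$ fibre cover is connected); otherwise a vertical component missing $\tilde{S}$ would have $K_X\cdot\Gamma=0$ and the criterion would not close. And in the fourth item the transverse term is not really $\beta^{p^n}$: locally $T$ is cut out by $w^{p^n}-z_i$ with $w$ the fibre coordinate, and the nonvanishing derivative you need is precisely the fact, established in subsection \ref{n-Tango}, that $z_i$ is a local parameter on $U_i$ (equivalently that $\mathrm{d}z_i$ generates $\omega_C$); granting that, $X$ is smooth along $\tilde{T}$ and the fibre over $z_i=c$ becomes $x^l=u\cdot(w-c^{1/p^n})^{p^n}$ with $u$ a unit, which is the asserted cusp since $\gcd(l,p^n)=1$.
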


%\subsection{The main theorem}
%First we consider Fujita's conjecture in positive characteristics. The Kodaira-type vanishing theorems fail in positive characteristics. We will use the Serre vanishing theorem instead.

%Let $X$ be a variety over $k$ with ${\rm char}(k)=p>0$ and $F_X:X\rightarrow X$ denote the absolute Frobenius morphism.
The following lemma is adapted from \cite{Zhe17} and we provide a proof for the reader's convenience.
\begin{lem}\label{lem_zheng}(cf. \cite[Prop3.3]{Zhe17}) With the same notations as above,
for any integer $m=ql+r\geq0$ with $0\leq r\leq l-1$, we have
\begin{enumerate}
  \item $\psi_{*} \mathcal{O}_{X}(-m \tilde{S})=\left(\bigoplus_{i=0}^{r-1} \mathcal {M}^{-i}(-(q+1)S)\right) \oplus\left(\bigoplus_{i=r}^{l-1} \mathcal {M}^{-i}(-qS)\right)$ and
  \item $\psi_{*} \mathcal{O}_{X}(m \tilde{S})=\left(\bigoplus_{i=0}^{l-r-1} \mathcal {M}^{-i}(qS)\right) \oplus\left(\bigoplus_{i=l-r}^{l-1} \mathcal {M}^{-i}((q+1)S)\right)$
\end{enumerate}
\end{lem}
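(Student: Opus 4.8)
The plan is to exploit that $\psi$ is a Galois cover and to compute $\psi_\ast$ one weight at a time. Since $l\mid p^n+1$ we have $\gcd(l,p)=1$, so $\psi$ carries a faithful $\mu_l$-action with quotient $\mathbb{P}(\mathcal{E})$, and the defining decomposition $\psi_\ast\mathcal{O}_X=\bigoplus_{i=0}^{l-1}\mathcal{M}^{-i}$ is precisely the decomposition into characters, the $i$-th summand $\mathcal{M}^{-i}$ being the weight-$i$ eigensheaf. Because $\tilde{S}$ is the reduced preimage of $S$, it is stable under the action, so each $\mathcal{O}_X(\pm m\tilde{S})$ is equivariant and $\psi_\ast\mathcal{O}_X(\pm m\tilde{S})$ again splits into $l$ eigensheaves, the weight-$i$ piece agreeing with $\mathcal{M}^{-i}$ away from $S$. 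Thus it suffices to identify, in each weight $i$, by which multiple of $S$ the vanishing (resp.\ pole) order along $\tilde{S}$ twists $\mathcal{M}^{-i}$.

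First I would reduce everything to a neighbourhood of $S$. Since $S$ and $T$ are disjoint and $m\tilde{S}$ is supported on $\tilde{S}\subset\psi^{-1}(S)$, over $\mathbb{P}(\mathcal{E})\setminus S$ the sheaf $\psi_\ast\mathcal{O}_X(\pm m\tilde{S})$ coincides with $\bigoplus_i\mathcal{M}^{-i}$; hence only the behaviour along $S$ contributes a twist and $\tilde{T}$ plays no role. Near $S$ the cover is totally ramified with local model $w^l=s_S$, where $s_S$ is a local equation of $S$ and $\tilde{S}=\{w=0\}$; the weight-$i$ generator is $w^i$, and since $\psi^\ast S=l\tilde{S}$ by \eqref{seq1}, a weight-$i$ section $f\cdot w^i$ has order $l\cdot\mathrm{ord}_S(f)+i$ along $\tilde{S}$. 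Here I also use that $\psi|_{\tilde{S}}\colon\tilde{S}\to S$ is an isomorphism of smooth curves, so these orders are well defined globally.

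It then remains to impose the order condition in each weight. For $\psi_\ast\mathcal{O}_X(-m\tilde{S})$ with $m=ql+r$ I require $l\cdot\mathrm{ord}_S(f)+i\ge m$, i.e. $\mathrm{ord}_S(f)\ge\lceil(m-i)/l\rceil=q+\lceil(r-i)/l\rceil$, which equals $q+1$ precisely when $0\le i\le r-1$ and $q$ when $r\le i\le l-1$; this yields the weight-$i$ summand $\mathcal{M}^{-i}(-(q+1)S)$ or $\mathcal{M}^{-i}(-qS)$ and hence $(1)$. Dually, for $\psi_\ast\mathcal{O}_X(m\tilde{S})$ I allow a pole, $l\cdot\mathrm{ord}_S(f)+i\ge -m$, i.e. $\mathrm{ord}_S(f)\ge -q-\lfloor(r+i)/l\rfloor$, so the weight-$i$ summand is $\mathcal{M}^{-i}(qS)$ for $0\le i\le l-r-1$ and $\mathcal{M}^{-i}((q+1)S)$ for $l-r\le i\le l-1$, which is $(2)$.

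The main obstacle I expect is purely bookkeeping: pinning down the correct integer twist in each weight (the appearance of $\lceil\,\cdot\,\rceil$ versus $\lfloor\,\cdot\,\rfloor$ and the resulting index ranges) and verifying carefully that the weight-$i$ eigensheaf of $\psi_\ast\mathcal{O}_X$ is canonically $\mathcal{M}^{-i}$ with its local generator behaving as claimed along $\tilde{S}$. Everything else is formal once the $\mu_l$-equivariance and the disjointness $S\cap T=\emptyset$ are in hand. As an alternative avoiding the explicit local model, one could induct on $m$ using the restriction sequences $0\to\mathcal{O}_X(-(m+1)\tilde{S})\to\mathcal{O}_X(-m\tilde{S})\to\mathcal{O}_{\tilde{S}}(-m\tilde{S})\to0$ together with the isomorphism $\psi|_{\tilde{S}}$ and the conormal identity $\mathcal{O}_X(\tilde{S})|_{\tilde{S}}\cong\psi^\ast\mathcal{M}|_{\tilde{S}}$ coming from $\mathcal{O}(\tilde{S}+\tilde{T})=\psi^\ast\mathcal{M}$; but the eigensheaf computation is the most direct route.
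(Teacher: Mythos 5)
Your argument is correct, and it reaches the lemma by a genuinely different route from the paper's. The paper works globally with short exact sequences: it pushes forward $0\to\mathcal{O}_X(-m(\tilde S+\tilde T))\to\mathcal{O}_X(-m\tilde S)\to\mathcal{O}_X|_{m\tilde T}\to 0$ and its companions, identifies $\psi_*\mathcal{O}_X(-m(\tilde S+\tilde T))$ with $\mathcal{M}^{-m}\otimes\psi_*\mathcal{O}_X$ by the projection formula, writes out the multiplication map $\hbar\colon\mathcal{M}^{-m}\otimes\psi_*\mathcal{O}_X\to\psi_*\mathcal{O}_X$ component by component using the algebra structure $\mathcal{M}^{-l}=\mathcal{M}^{0}(-S-T)\subset\mathcal{M}^{0}$, reads off $\psi_*(\mathcal{O}_X|_{m(\tilde S+\tilde T)})$ as the cokernel, splits off the $S$- and $T$-parts using $S\cap T=\emptyset$, and obtains $(1)$ as a kernel; $(2)$ is then deduced from $(1)$ via $m\tilde S=\psi^*((q+1)S)-(l-r)\tilde S$ rather than proved directly. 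You instead exploit the $\mu_l$-equivariant structure (legitimately: $l\mid p^n+1$ forces $\gcd(l,p)=1$, so $\mu_l$ is linearly reductive and the pushforward of any equivariant sheaf splits into eigensheaves) and pin down the twist in each weight by a valuation computation in the totally ramified local model $w^l=s_S$, treating $(1)$ and $(2)$ symmetrically; your ceiling/floor arithmetic checks out and reproduces the stated index ranges. Your route is more direct and makes the bookkeeping transparent, at the cost of invoking the group action and local coordinates; the paper's route stays entirely inside the $\mathcal{O}_{\mathbb{P}(\mathcal{E})}$-algebra $\psi_*\mathcal{O}_X=\bigoplus_{i=0}^{l-1}\mathcal{M}^{-i}$, which is also the form in which the module structure gets reused in the subsequent base-point lemma. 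If you write yours up, the one step to make explicit is that the order condition along $\tilde S$ can be imposed weight by weight (immediate either from linear reductivity or from the fact that the orders $l\,\mathrm{ord}_S(f_i)+i$ are pairwise distinct modulo $l$, so no cancellation can occur among isotypic components).
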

\begin{proof}
First, by the definition of an $l$-cyclic cover we know that $\pi_*\mathcal {O}_X=\mathcal {M}^0 \oplus \mathcal {M}^1 \oplus\cdots\oplus \mathcal {M}^{-l+1}$ forms an
 $\mathcal {O}_{\mathbb{P}(\mathcal {E})}$-algebra with multiplication defined by:
$$\mathcal {M}^{-i_{1}}\otimes \mathcal {M}^{-i_{2}}\rightarrow \mathcal {M}^{-i_{1}-i_{2}} \, \text{and }\, \mathcal {M}^{-l}= \mathcal {M}^0(-S-T)\hookrightarrow \mathcal {M}^0.$$ Here, we denote $\mathcal {O}_{\mathbb{P}(\mathcal {E})}$ by $\mathcal {M}^0$ for notational convenience.
Consider pushing down the following diagram via $\psi$
$$
\xymatrix{
  0 \ar[r]  & \mathcal {O}_{X}(-m(\tilde{S}+\tilde{T})) \ar[d]_{Id} \ar[r]^{} & \mathcal {O}_X(-m\tilde{S}) \ar[d]_{} \ar[r]^{} & \mathcal {O}_X|_{m\tilde{T}} \ar[d]_{} \ar[r]^{} & 0  \\
 0  \ar[r]^{} &\mathcal {O}_{X}(-m(\tilde{S}+\tilde{T}))   \ar[r]^{} & \mathcal {O}_{X}  \ar[d]_{} \ar[r]^{} &\mathcal {O}_X|_{m(\tilde{S}+\tilde{T})}  \ar[d]_{} \ar[r]^{} & 0  \\
  & &\mathcal {O}_X|_{m\tilde{S}} \ar[r]^{\simeq} &\mathcal {O}_X|_{m\tilde{S}}  & .}
$$
By ($\clubsuit$) and  projection formula we obtain
 $$
\xymatrix{
  0 \ar[r]  & \mathcal {M}^{-m}\otimes\psi_*\mathcal {O}_{X} \ar[d]_{} \ar[r]^{} & \psi_*\mathcal {O}_X(-m\tilde{S}) \ar[d]_{} \ar[r]^{} & \psi_*(\mathcal {O}_X|_{m\tilde{T}}) \ar[d]_{} \ar[r]^{} & 0  \\
 0  \ar[r]^{} &\mathcal {M}^{-m}\otimes\psi_*\mathcal {O}_{X}   \ar[r]^-{\hbar} & \psi_*\mathcal {O}_{X}  \ar[d]_{} \ar[r]^{} &\psi_*(\mathcal {O}_X|_{m(\tilde{S}+\tilde{T})} ) \ar[d]_{} \ar[r]^{} & 0  \\
  & &\psi_*(\mathcal {O}_X|_{m\tilde{S}}) \ar[r]^{} &\psi_*(\mathcal {O}_X|_{m\tilde{S}})  &   }
$$
where all the morphisms are $\psi_*\mathcal {O}_X$-mod homomorphisms.

In fact, the morphism $\hbar$ can be explicitly described as follows:
$$
\xymatrix{
  \mathcal {M}^{-ql-r}\ar@{}[r]|{\bigoplus}\ar[drrr]& {\cdots}\ar@{}[r]|{\bigoplus}\ar[drrr]& \mathcal {M}^{-ql-l+1}\ar@{}[r]|{\bigoplus}\ar[drrr]&\mathcal {M}^{-(q+1)l}\ar@{}[r]|{\bigoplus}\ar@{-->}[dlll]& {\cdots}\ar@{}[r]|>{\bigoplus}\ar@{-->}[dlll]&\mathcal {M}^{-(q+1)l-r+1} \ar@{-->}[dlll]\\
  \mathcal {O}_{\mathbb{P}(\mathcal {E})}\ar@{}[r]|{\bigoplus}& {\cdots}\ar@{}[r]|{\bigoplus} & \mathcal {M}^{-r+1}\ar@{}[r]|{\bigoplus}&\mathcal {M}^{-r}\ar@{}[r]|{\bigoplus}& {\cdots}\ar@{}[r]|{\bigoplus}&\mathcal {M}^{-l+1}  . }
$$
Thus, we obtain
$$\psi_* {O}_X|_{m(\tilde{S}+\tilde{T})}=\left(\bigoplus_{i=0}^{r-1} \mathcal {M}^{-i}|_{(q+1)(S+T)}\right) \oplus\left(\bigoplus_{i=r}^{l-1} \mathcal {M}^{-i}|_{q(S+T)}\right).$$
On the other hand, since $S\cap T=\emptyset$, $\psi_* {O}|_{m(\tilde{S}+\tilde{T})}$ decomposes as two direct summands:
$$\psi_* {O}_X|_{m\tilde{S}}=\left(\bigoplus_{i=0}^{r-1} \mathcal {M}^{-i}|_{(q+1)S}\right) \oplus\left(\bigoplus_{i=r}^{l-1} \mathcal {M}^{-i}|_{qS}\right)$$ and
$$\psi_* {O}_X|_{m\tilde{T}}=\left(\bigoplus_{i=0}^{r-1} \mathcal {M}^{-i}|_{(q+1)T}\right) \oplus\left(\bigoplus_{i=r}^{l-1} \mathcal {M}^{-i}|_{qT}\right).$$
Then by the second column of the second diagram of this proof, we obtain
$$\psi_{*} \mathcal{O}_{X}(-m \tilde{S})=\left(\bigoplus_{i=0}^{r-1} \mathcal {M}^{-i}(-(q+1)S)\right) \oplus\left(\bigoplus_{i=r}^{l-1} \mathcal {M}^{-i}(-qS)\right).$$

For the second equality, note that $\psi_{*} \mathcal{O}_{X}(m \tilde{S})=\psi_{*} \mathcal{O}_{X}(((q+1)l-(l-r)) \tilde{S})=\mathcal{O}_{X}((q+1)S)\otimes\psi_{*} \mathcal{O}_{X}(-(l-r) \tilde{S})$, and then it follows from the first equality.

\end{proof}

\section{Base points of adjoint line bundles}

We maintain all notations from the previous section for brevity. Let $C$ be an $n$-Tango curve with an associated divisor $\mathcal {L}=\mathcal {O}(D)=\mathcal {O}(lN)$ and let $X$ be an $n$-Raynaud surface over $C$. In this section
We study linear systems of the form $|m\tilde{S} + \phi^*Q|$, where $m\in \mathbb{N}^+$ and $Q$ is an ample divisor on $C$.
\subsection{Decomposition and module structure}
For $m=lq+r$ with $0\leq r<l$, Lemma \ref{lem_zheng} gives
\begin{align*}
\psi_*\mathcal{O}_X(m\tilde{S} + \phi^*Q)&=\psi_*\mathcal{O}_X(m \tilde{S})\otimes \pi^*(Q) \\
&\cong \left((\bigoplus_{i=0}^{l-r-1} \mathcal {M}^{-i}(qS)) \oplus(\bigoplus_{i=l-r}^{l-1} \mathcal {M}^{-i}((q+1)S))\right) \otimes \pi^*(Q) \\
&\triangleq \mathcal {M}_0 \oplus \mathcal {M}_1 \oplus \cdots \oplus \mathcal {M}_{l-1},
\end{align*}
where $\mathcal {M}_i =  \mathcal {M}^{-i}((q+[\frac{i+r}{l}])S)\otimes \pi^*(Q) $.

Note that it has a natural $\psi_*\mathcal{O}_X$-module structure where $\psi_*\mathcal {O}_X=\mathcal {M}^0 \oplus\cdots\oplus \mathcal {M}^{-l+1}$. And the multiplication is defined by:
$$ \mathcal {M}_{i}\otimes\mathcal {M}^{-1}\rightarrow \mathcal {M}_{i+1}$$ with the canonical inclusion $$\mathcal {M}^{-l}= \mathcal {M}^0(-S-T)\subset \mathcal {M}^0.$$
\subsection{Generation properties of sections}
Consider the natural decomposition
$$H^0(X, \mathcal{O}_X(m\tilde{S} + \phi^*Q))\cong \bigoplus_{i=0}^{l-1}H^0(\mathbb{P}(\mathcal {E}) , \mathcal {M}_i)$$
then we deduce the following key observation.
\begin{lem}(cf. \cite[Corollary 3.3] {GZZ}\label{fujita})\label{basepoint}
With the decomposition above, the following generation properties hold:
\begin{enumerate}
  \item(Non-generation along $T$) The sections in the subspace $$s \in \bigoplus_{i=1}^{l-1}H^0(\mathbb{P}(\mathcal {E}) , \mathcal {M}_i)\subset H^0(X, \mathcal{O}_X(m\tilde{S} + \phi^*Q))$$ cannot generate $\psi_*\mathcal{O}_S(m\tilde{S} + \phi^*Q)$ as a $\psi_*\mathcal{O}_X$-module along the divisor $T$.
  \item(Base point condition) Furthermore, if the line bundle $\mathcal {M}_0$ has a base point $x\in T$ as an $\mathcal{O}_{\mathbb{P}(\mathcal {E})}$-module,
   then all the sections $$s \in \bigoplus_{i=0}^{l-1}H^0(\mathbb{P}(\mathcal {E}) , \mathcal {M}_i)= H^0(X, \mathcal{O}_X(m\tilde{S} + \phi^*Q))$$ fail to generate $\psi_*\mathcal{O}_X(m\tilde{S} + \phi^*Q)$ as $\psi_*\mathcal{O}_X$-module at the point $x\in T$. In other words, $\psi^{-1}(x)$ is a base point of the line bundle $\mathcal{O}_X(m\tilde{S} + \phi^*Q)$ on $X$.
\end{enumerate}
\end{lem}
\begin{proof}
By the preceding argument, we conclude that
$$\psi_*\mathcal{O}_X(m\tilde{S} + \phi^*Q) \cong \mathcal {M}_0 \oplus \mathcal {M}_1 \oplus \cdots \oplus \mathcal {M}_{l-1}$$ is a $\psi_*\mathcal {O}_X=\bigoplus_{i=0}^{l-1} \mathcal {M}^{-i}$-module and the action of the $\mathcal {O}_{\mathbb{P}(\mathcal {E})}$-algebra $\psi_*\mathcal{O}_X$ on the components of $\psi_*\mathcal{O}_S(m\tilde{S} + \phi^*Q)$ into the first term is described as follows:
\begin{itemize}
  \item $\mathcal {M}_{i}\otimes \mathcal {M}^{i-l}=\mathcal {O}(-S-T)\otimes \mathcal {M}_0\subset \mathcal {M}_0$ as a sub-sheaf defined by tensoring with the ideal sheaf $\mathcal {O}(-S-T)$, when $0\leq i\leq l-r-1$;
  \item $\mathcal {M}_{i}\otimes \mathcal {M}^{i-l}=\mathcal {O}(-T)\otimes \mathcal {M}_0\subset \mathcal {M}_0$ as a sub-sheaf defined by tensoring with the ideal sheaf $\mathcal {O}(-T)$, when $l-r\leq i\leq l-1$.
\end{itemize}
So the sections in any component of $\psi_*\mathcal{O}_S(m\tilde{S} + \phi^*Q)$ other than the first cannot generate $\psi_*\mathcal{O}_S(m\tilde{S} + \phi^*Q)$ as $\psi_*\mathcal{O}_X$-module along the divisor $T$.
\end{proof}
\subsection{Base locus on ruled surfaces}
Building on Lemma \ref{basepoint}, we reduce the study of the complete linear system $|m\tilde{S} + \phi^*Q|$ to its first component:
\[
\mathcal{M}_0 = \mathcal{O}_{\mathbb{P}(\mathcal{E})}\left(\left(p^n - 1 - \frac{p^n + 1}{l} + q\right)S\right) \otimes \pi^*\left((p^n + l)N + mQ\right)
\]
To this end, we requires the following technical lemma.

Before proving the lemma, let us first outline the main idea of the proof.
To establish a criterion for the existence of base points of $|\mathcal{M}_0|$ along $T$ on the ruled surface, one might attempt to restrict it to $T$. However the degree of this restriction is typically large due to the special  geometric nature of the multiple section $T$. For instance, when analyzing an adjoint bundle of some ample bundle on $X$, one generally has $\deg\mathcal{M}_0|_T >2g$, rendering effective criteria infeasible. Instead, Our strategy is bypasses this obstacle by constructing a family of sections of the projection $\pi:\mathbb{P}(\mathcal{E})\rightarrow C$. If the restrictions of $\mathcal{M}_0$ to those sections is the same line bundle with a base point $P$ on $C$ and those sections do not intersect along the fiber $\pi^{-1}(P)$, then the base locus of $\mathcal{M}_0$ contains infinite points of the fibre $\pi^{-1}(P)$ and hence the entire fibre $\pi^{-1}(P)$ since the base locus is Zariski-closed. Consequently $|\mathcal{M}_0|$ has base point along $T$ at $\pi^{-1}(P)\cap T$.

\begin{lem}\label{fixpart}
Let $C$ be a smooth projective curve over an algebraically closed field $\mathbf{k}$  equipped with the canonical sheaf $\omega_C$. Let $\mathcal{E}$ be a vector bundle of rank $2$ on $C$.
Suppose that there is a surjective morphism $\sigma_0:\mathcal{E}\twoheadrightarrow \mathcal {L}_0$ where $\mathcal {L}_0$ is a line bundle on $C$ satisfying the following conditions:
\begin{enumerate}
  \item [(1)] $\dim H^0(C,\mathcal{E}^{\vee}\otimes \mathcal {L}_0)\geq2$ and
  \item [(2)] $ H^0(C,\omega_C\otimes {\mathcal {L}_0^{-q}(-Q)})\neq 0$ for some divisor $Q$ of positive degree on $C$ and some integer $q$.
\end{enumerate}
Then there exists a nonempty open subset $C_0\subset C$ such that for any closed point $P\in C_0$, the base locus of the linear system $$\mid\mathcal {O}_{\mathbb{P}( \mathcal{E})}(q)\otimes \pi^*\mathcal {O}(Q+P)\mid$$ contains the fibre $F=\pi^{-1}(P)$, where $\pi:\mathbb{P}(\mathcal{E})\rightarrow C$ is the projection from the ruled surface $\mathbb{P}(\mathcal{E})$ to $C$.
\end{lem}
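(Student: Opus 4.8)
The plan is to translate the statement that the whole fibre $F=\pi^{-1}(P)$ lies in the base locus into a \emph{global generation} statement for a dual bundle on $C$, and then to produce enough sections of that dual bundle directly from the two hypotheses. First I would pass to the base curve: with the convention $\pi_*\mathcal{O}_{\mathbb{P}(\mathcal{E})}(q)=\mathrm{Sym}^q\mathcal{E}$ one has
$$H^0(\mathbb{P}(\mathcal{E}),\mathcal{O}_{\mathbb{P}(\mathcal{E})}(q)\otimes\pi^*\mathcal{O}(Q+P))\cong H^0(C,\mathrm{Sym}^q\mathcal{E}\otimes\mathcal{O}(Q+P)),$$
and restriction of a section to $F$ is exactly its image under the fibre evaluation map $\rho_P$ onto $(\mathrm{Sym}^q\mathcal{E})_P\cong H^0(F,\mathcal{O}_F(q))$. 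Writing $\mathcal{F}=\mathrm{Sym}^q\mathcal{E}\otimes\mathcal{O}(Q)$ and using $0\to\mathcal{F}\to\mathcal{F}(P)\to\mathcal{F}(P)|_P\to0$, the fibre $F$ lies in the base locus precisely when $\rho_P=0$, i.e. when $h^0(\mathcal{F}(P))=h^0(\mathcal{F})$. A Riemann--Roch / Serre duality computation converts this into a condition on $\mathcal{G}:=\mathrm{Sym}^q(\mathcal{E}^\vee)\otimes\omega_C(-Q)$, a bundle of rank $q+1$: from $h^0(\mathcal{F}(P))-h^0(\mathcal{F})=(q+1)+h^1(\mathcal{F}(P))-h^1(\mathcal{F})$ together with the duality identities $h^1(\mathcal{F})=h^0(\mathcal{G})$ and $h^1(\mathcal{F}(P))=h^0(\mathcal{G}(-P))$, the equality $h^0(\mathcal{F}(P))=h^0(\mathcal{F})$ is equivalent to $h^0(\mathcal{G})-h^0(\mathcal{G}(-P))=q+1=\mathrm{rank}\,\mathcal{G}$, that is, to $\mathcal{G}$ being globally generated at $P$. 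So it suffices to exhibit a nonempty open $C_0\subset C$ on which $\mathcal{G}$ is globally generated.

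Next I would build sections of $\mathcal{G}$ from the hypotheses. Condition (2) yields a nonzero $\psi\in H^0(C,\omega_C\otimes\mathcal{L}_0^{-q}(-Q))$. For $\phi_1,\dots,\phi_q\in H^0(C,\mathcal{E}^\vee\otimes\mathcal{L}_0)=\mathrm{Hom}(\mathcal{E},\mathcal{L}_0)$ the symmetric product $\phi_1\cdots\phi_q$ lies in $H^0(\mathrm{Sym}^q(\mathcal{E}^\vee)\otimes\mathcal{L}_0^q)$, and multiplying by $\psi$ lands in $H^0(\mathcal{G})$. At a point $P$ with $\psi(P)\neq0$, after trivialising the line-bundle factors, these sections span $\mathcal{G}_P\cong\mathrm{Sym}^q(\mathcal{E}_P^\vee)$ as soon as the products $\phi_1(P)\cdots\phi_q(P)$ do. Here I would use the surjection $\sigma_0\in H^0(\mathcal{E}^\vee\otimes\mathcal{L}_0)$, which is nowhere zero on fibres, together with a second section $\phi$ supplied by condition (1), linearly independent of $\sigma_0$ in $H^0$. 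The wedge $\sigma_0\wedge\phi\in H^0((\det\mathcal{E})^{-1}\otimes\mathcal{L}_0^2)$ cannot vanish identically: surjectivity of $\sigma_0$ forces the ratio $\phi/\sigma_0$ to be a globally regular function, hence a constant, contradicting independence. Thus $\sigma_0(P),\phi(P)$ form a basis of $\mathcal{E}_P^\vee$ away from the finite zero set of $\sigma_0\wedge\phi$, and there the mixed products $\sigma_0^{\,a}\phi^{\,q-a}(P)$ ($0\le a\le q$) give the monomial basis of $\mathrm{Sym}^q(\mathcal{E}_P^\vee)$. Taking $C_0=\{P:\psi(P)\neq0\text{ and }(\sigma_0\wedge\phi)(P)\neq0\}$, a nonempty open set, $\mathcal{G}$ is globally generated at each $P\in C_0$, which by the reduction above means that $F=\pi^{-1}(P)$ lies in the base locus of $|\mathcal{O}_{\mathbb{P}(\mathcal{E})}(q)\otimes\pi^*\mathcal{O}(Q+P)|$.

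The main obstacle I anticipate is a characteristic-$p$ pitfall at the linear-algebra step: the pure powers $w^{q}$ for $w$ in a two-dimensional space $W$ do \emph{not} span $\mathrm{Sym}^q W$ in general (already for $q=p$ one only gets $\langle e_0^{\,p},e_1^{\,p}\rangle$), so the argument must use the full collection of mixed products $\sigma_0^{\,a}\phi^{\,q-a}$ rather than the $q$-th power of a single varying section; this is precisely why condition (1) is phrased through the positive-dimensional space $\mathbb{P}(H^0(C,\mathcal{E}^\vee\otimes\mathcal{L}_0))$ and not through a single section. I expect the remaining care to go into the Serre-duality reduction, namely checking that the evaluation map $H^0(\mathcal{G})\to\mathcal{G}_P$ is genuinely dual to the fibre-restriction $\rho_P$, but this is routine once the twisting sequences and $\chi$-count are in place.
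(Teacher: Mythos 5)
Your reduction of the statement to global generation of a dual bundle at $P$ is correct and is a clean dual reformulation of what the paper does, but the construction of sections breaks down in exactly the regime where the lemma is applied, namely $q\ge p$. The Serre dual you need is $\mathcal G=\mathcal F^\vee\otimes\omega_C=(\mathrm{Sym}^q\mathcal E)^\vee\otimes\omega_C(-Q)$, whose fibre at $P$ is $(\mathrm{Sym}^q V)^*=\Gamma^q(V^*)$ (divided powers) for $V=\mathcal E_P$; in characteristic $p$ with $q\ge p$ this is \emph{not} canonically $\mathrm{Sym}^q(\mathcal E^\vee)\otimes\omega_C(-Q)$, so the identity $h^1(\mathcal F)=h^0(\mathrm{Sym}^q(\mathcal E^\vee)\otimes\omega_C(-Q))$ is already unjustified. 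Worse, your sections $\sigma_0^{\,a}\phi^{\,q-a}\cdot\psi$ live in $H^0(\mathrm{Sym}^q(\mathcal E^\vee)\otimes\omega_C(-Q))$ and can only be fed into the correct $\mathcal G$ through the symmetrization (norm) map $\mathrm{Sym}^q(V^*)\to\Gamma^q(V^*)$. In the basis $x=\sigma_0(P),y=\phi(P)$ of $V^*$ with dual basis $e_0,e_1$ of $V$, the induced pairing is $\langle x^ay^{q-a},\,e_0^ie_1^{q-i}\rangle=a!\,(q-a)!\,\delta_{ai}$, so the image of the norm map omits every dual basis vector $(e_0^ie_1^{q-i})^*$ with $i!\,(q-i)!\equiv0\pmod p$; already for $q=p$ it misses $(e_0^p)^*$ and $(e_1^p)^*$ and has corank $2$ in the fibre. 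Hence your sections cannot generate $\mathcal G_P$, the required count $h^0(\mathcal G)-h^0(\mathcal G(-P))=q+1$ is not established, and the proof fails. This is not a corner case: in the paper's application $q$ is of the order of $p^n-2$ with $n\gg0$, so $q\ge p$ is the only regime that matters.

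Ironically, your ``anticipated obstacle'' paragraph has the moral exactly backwards. The functionals you are missing are precisely the divided powers $\mathrm{Sym}^q\sigma\in\mathrm{Hom}(\mathrm{Sym}^q\mathcal E,\mathcal L_0^q)$ of \emph{single} homomorphisms $\sigma=k_0\sigma_0+k_1\phi$: since $\mathrm{Sym}^q\sigma(e_0^ie_1^{q-i})=k_0^ik_1^{q-i}$, a linear functional vanishing on all of them yields a nonzero polynomial vanishing identically on $\mathbf k^2$, impossible over an infinite field, so these span all of $\Gamma^q(V^*)$ in every characteristic. (The pure powers $w^q$ indeed fail to span $\mathrm{Sym}^qW$, as you note, but $\mathrm{Sym}^qW$ is the wrong target.) Replacing your mixed products by $\psi\cdot\mathrm{Sym}^q(k_0\sigma_0+k_1\phi)$ for varying $(k_0,k_1)$ repairs the argument, and is precisely the linear-algebra shadow of the paper's proof: there one restricts the linear system to the pencil of sections of $\mathbb P(\mathcal E)$ cut out by $k_0\sigma_0+k_1\sigma_1$, uses Lemma \ref{basepointoncurve}(2) and condition (2) to see that $P$ is a base point of $|\mathcal L_0^q(Q+P)|$ on each such section, and concludes that the base locus contains infinitely many points of the fibre $\pi^{-1}(P)\cong\mathbb P^1$, hence the whole fibre by Zariski closedness.
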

\begin{proof}
Note that $\mathbb{P}(H^0(C,\mathcal{E}^{\vee}\otimes \mathcal {L}_{0}))$ parametrises all the morphisms $Mor_C(\mathcal {E}, \mathcal {L}_{0})$ up to scalar isomorphisms of $\mathcal {L}_{0}$.
Since surjectivity is an open condition, there is a non-empty open subset $U_0\subset \mathbb{P}(H^0(C,\mathcal {E}^{\vee}\otimes \mathcal {L}_0))$ such that the corresponding morphisms are surjective.
Thus, the sections $\sigma$ of $\pi:\mathbb{P}(\mathcal {E})\rightarrow C$ with $\sigma^*\mathcal {O}(1)\simeq \mathcal {L}_0$ are parametrised by $U_0$.
Given $\dim U_0>0$, we choose another section $(\sigma_0\neq)\sigma_1\in U_0$  and let $\widetilde{U_0}$ denote the intersection of $U_0$ with the line $\mathbb{P}^1\subset  \mathbb{P}(H^0(C,\mathcal {E}^{\vee}\otimes \mathcal {L}_{0}))$ spanned by $\sigma_0$ and $\sigma_1$. Then $\widetilde{U_0}$ is a (an affine) curve.
Since all the sections in $\widetilde{U_0}$ are of the form $k_0\sigma_0+k_1\sigma_1$ with $k_0,k_1\in \mathbf{k}$, the intersection of any two sections in $\widetilde{U_0}$ on the ruled surface is exactly the set $\sigma_0\cap\sigma_1$.

Considering the restriction $\sigma^*(\mathcal {O}(q)\otimes \pi^*(\mathcal {O}(Q)))\simeq \mathcal {L}_0^q(Q)$  to sections in $\widetilde{U_0}$,
since $H^0(C,\omega_C\otimes {\mathcal {L}_0^{-q}(-Q)})\neq0$,  Lemma \ref{basepointoncurve} guarantees a nonempty open subset $C_0\subset C$ such that for any $P\in C_0$, the base locus of $\mid \mathcal {L}_0^q(Q+P)\mid$ contains $P$.
Consequently, there exists a nonempty open subset $C_0\subset C$ such that for any $P\in C_0$ the base locus of $\mid\mathcal {O}(q)\otimes\pi^*\mathcal {O}(Q+P)\mid$ contains the point $\pi^{-1}(P)\cap \sigma (C)$ for every $\sigma \in \widetilde{U_0}$.
Noting that any two sections in $\widetilde{U_0}$  intersect only along $\sigma_0\cap\sigma_1$ on the ruled surface. We may further shrink $C_0$ to ensure $\pi(\sigma_0\cap\sigma_1)\cap C_0=\emptyset$. For a fixed $P$, the base locus of $\mid\mathcal {O}(q)\otimes\pi^*(Q+P)\mid$ then contains infinitely many points $\{\pi^{-1}(P)\cap \sigma (C)\mid \sigma \in \widetilde{U_0}\}$ on the fibre $\pi^{-1}(P)$ and hence contains the entire fibre $\pi^{-1}(P)$ since the base locus is Zariski-closed.
\end{proof}
\begin{lem}\label{basepointoncurve}
Let $C$ be a smooth projective curve over an algebraically closed field  with a line bundle $\mathcal {L}$ on it.
\begin{enumerate}
  \item If $H^0(C, {\mathcal {L}})\neq0$, then there exists a nonempty open subset $U\subset C$ such that $$h^0(C, \mathcal {L} \otimes\mathcal {O}(-x))=h^0(C,\mathcal {L})-1$$ for any closed point $x\in U$.
  \item If $H^0(C,\omega_C\otimes {\mathcal {L}}^{-1})\neq0$, then there exists a nonempty open subset $U\subset C$ such that $$h^0(C, \mathcal {L} \otimes\mathcal {O}(x))=h^0(C,\mathcal {L})$$ for any closed point $x\in U$.
\end{enumerate}
\end{lem}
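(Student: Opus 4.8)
The plan is to prove part (1) directly by a base-locus and evaluation argument, and then to deduce part (2) from part (1) by applying it to the \emph{adjoint} line bundle $\omega_C\otimes \mathcal {L}^{-1}$, converting between $h^0$ and $h^1$ via Serre duality and balancing the Euler characteristics with Riemann--Roch.

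For part (1), I would begin with the linear system attached to $H^0(C,\mathcal {L})$. Fix any nonzero section $s_0\in H^0(C,\mathcal {L})$; its zero divisor is a finite set of points, and the base locus $\mathrm{Bs}|\mathcal {L}|$ is contained in $\mathrm{Supp}(\mathrm{div}(s_0))$, hence finite. Therefore $U:=C\setminus \mathrm{Bs}|\mathcal {L}|$ is a nonempty open subset. For a closed point $x\in U$ there is, by definition of the base locus, a section not vanishing at $x$, so the evaluation map
$$\mathrm{ev}_x\colon H^0(C,\mathcal {L})\longrightarrow \mathcal {L}\otimes \mathbf{k}(x)\cong \mathbf{k}$$
is surjective. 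Its kernel is exactly the space of sections vanishing at $x$, i.e. $H^0(C,\mathcal {L}\otimes \mathcal {O}(-x))$, and surjectivity forces this kernel to have codimension one. This gives $h^0(C,\mathcal {L}\otimes \mathcal {O}(-x))=h^0(C,\mathcal {L})-1$ for every $x\in U$, as claimed.

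For part (2), I would set $\mathcal {M}:=\omega_C\otimes \mathcal {L}^{-1}$. The hypothesis is precisely $H^0(C,\mathcal {M})\neq 0$, so part (1) supplies a nonempty open subset $U\subset C$ with $h^0(C,\mathcal {M}\otimes \mathcal {O}(-x))=h^0(C,\mathcal {M})-1$ for all $x\in U$. Now I translate this through Serre duality: $h^1(C,\mathcal {L})=h^0(C,\omega_C\otimes \mathcal {L}^{-1})=h^0(C,\mathcal {M})$, while
$$h^1(C,\mathcal {L}\otimes \mathcal {O}(x))=h^0\!\left(C,\omega_C\otimes \mathcal {L}^{-1}\otimes \mathcal {O}(-x)\right)=h^0(C,\mathcal {M}\otimes \mathcal {O}(-x)).$$
Comparing these two identities with the conclusion of part (1) yields $h^1(C,\mathcal {L}\otimes \mathcal {O}(x))=h^1(C,\mathcal {L})-1$ for $x\in U$. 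Finally, Riemann--Roch gives $\chi(\mathcal {L}\otimes \mathcal {O}(x))=\chi(\mathcal {L})+1$ since the degree increases by one; subtracting the two Euler-characteristic formulas and substituting the $h^1$-relation forces $h^0(C,\mathcal {L}\otimes \mathcal {O}(x))=h^0(C,\mathcal {L})$.

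I do not expect a genuine obstacle here, as the statement is a standard generic-behaviour fact for linear systems on curves; the only points requiring care are verifying that the open set in part (1) is actually nonempty (which follows from finiteness of the zero locus of a single nonzero section) and bookkeeping the Serre-duality and Riemann--Roch identities in part (2) so that the $h^1$ decrement matches the $\chi$ increment exactly. The elegance of the argument is that part (2) is not proved independently but is read off as the Serre-dual incarnation of part (1) applied to $\omega_C\otimes \mathcal {L}^{-1}$.
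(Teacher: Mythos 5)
Your proposal is correct and follows essentially the same route as the paper: take $U$ to be the complement of the base locus of $|\mathcal {L}|$ (resp.\ of $|\omega_C\otimes\mathcal {L}^{-1}|$) and conclude with Riemann--Roch and Serre duality. You merely spell out the details the paper leaves implicit (the evaluation-map argument for (1) and the $h^1$/$\chi$ bookkeeping for (2)), and these details check out.
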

\begin{proof}
Note that $H^0(C, {\mathcal {L}})\neq0$ and $H^0(C,\omega_C\otimes {\mathcal {L}}^{-1})\neq0$ imply that the base locus  $\texttt{Bs}(|\mathcal {L}|)\varsubsetneqq C$ and $\texttt{Bs}(|\omega_C\otimes {\mathcal {L}}^{-1}|)\varsubsetneqq C$ respectively. Let $U=C\setminus \texttt{Bs}(|\mathcal {L}|)$ and $U=C\setminus \texttt{Bs}(|\omega_C\otimes {\mathcal {L}}^{-1}|)$ in (1) and (2) respectively, then the two statements follow from Riemann-Roch formula immediately.
\end{proof}
\subsection{Base points on n-Raynaud surfaces}
\begin{thm}\label{main}
Let $C$ be an $n$-Tango curve with an associated vector bundle $\mathcal{E}$ of rank $2$, and let $\psi:X\stackrel{l:1}\rightarrow\mathbb{P}(\mathcal {E})$ be the corresponding $n$-Raynaud surface constructed in section \ref{preliminary}. For any integer $m=lq+r \in \mathbb{N}^+$ where $0\leq r<l$,
suppose that there exists a surjective morphism $\sigma_0:\mathcal{E}\twoheadrightarrow \mathcal {L}_0$ to a line bundle $\mathcal {L}_0$  on $C$ satisfying
\begin{enumerate}
  \item [(1)] $\dim H^0(C,\mathcal{E}^{\vee}\otimes \mathcal {L}_0)\geq2$ and
  \item [(2)] $ H^0(C,\omega_C\otimes {\mathcal {L}_0^{-q}(-Q)})\neq 0$ for some divisor $Q$ of positive degree on $C$.
\end{enumerate}
Then $q< p^n$ and there exists a nonempty open subset $C_0\subset C$ such that for every point $P\in C_0$ the ample line bundle $\mathcal{O}_X(m\tilde{S} + \phi^*(Q+P))$ on $X$ has base point $\phi^{-1}(P)\cap \widetilde{T}$.

In particular, if the condition (2) is replaced by
 \begin{enumerate}
   \item [(2*)] $H^0(C,\omega_C\otimes {\mathcal {L}_0^{-(p^n-1-d)}(-(p^n+l)N-Q)})\neq 0$ for some divisor $Q$ of positive degree,
 \end{enumerate}
 then  the same conclusion holds for the adjoint line bundle $\mathcal{O}_X(K_X +r\tilde{S} + \phi^*(Q+P))$.
\end{thm}
\begin{proof}
First we claim that the surjectivity implies that $\deg \mathcal {L}_0\geq \deg \mathcal {L}$. Indeed, pulling back  the non-split sequence
$$0\rightarrow \mathcal{O}_{C}\rightarrow \mathcal {E}\rightarrow \mathcal {L}\rightarrow 0$$
obtained in subsection \ref{ruledsurface} via the $n$-th iterated  Frobenius map yields the splitting sequence
$$\xymatrix@C=0.5cm{
  0 \ar[r] & \mathcal{O}_{C} \ar[r] & F^{n*}\mathcal {E} \ar[r] &\mathcal {L}^{p^n} \ar[r]\ar@/_/[l]_{\tau} & 0 }.$$
Composing $\tau$ with the quotient $F^{n*}(\sigma_0):F^{n*}\mathcal{E}\twoheadrightarrow \mathcal {L}_0^{p^n}$ gives a morphism $\mathcal {L}^{p^n}\rightarrow \mathcal {L}_0^{p^n}$. If $\deg \mathcal {L}_0< \deg \mathcal {L}$, this morphism must be zero, and $\mathcal {L}^{p^n}=\texttt{Ker}(F^{n*}(\sigma_0))$ by the saturation of $\mathcal {L}^{p^n}$, forcing $\mathcal {L}_0\simeq\mathcal {O}_C$. This would imply the sequence splits, contradicting its non-split nature.
Next, note that $\omega_C\simeq \mathcal {L}^{p^n}$, and by condition (2) we deduce $q< p^n$.

For any closed point $P\in C$, consider the push-forward $\mathcal{O}_X(m\tilde{S} + \phi^*(Q+P))$ onto the ruled surface $\mathbb{P}(\mathcal {E})$
$$\psi_*\mathcal{O}_X(m\tilde{S} + \phi^*(Q+P)) \cong \mathcal {M}_0 \oplus \mathcal {M}_1 \oplus \cdots \oplus \mathcal {M}_{l-1},$$
where the first term is $\mathcal {M}_0=\mathcal {O}_{\mathbb{P}( \mathcal{E})}(q)\otimes \pi^*\mathcal {O}(Q+P)$. By Lemma \ref{fixpart} there exists a nonempty open
subset $C_0\subset C$ such that for all closed points $P\in C_0$, the base locus of $\mid\mathcal {O}_{\mathbb{P}( \mathcal{E})}(q)\otimes \pi^*\mathcal {O}(Q+P)\mid$ contains the fibre
$F=\pi^{-1}(P)$, where $\pi:\mathbb{P}(\mathcal{E})\rightarrow C$ is the natural projection. By Lemma \ref{basepoint}, it follows that $\psi^{-1}(F\cap T)=\phi^{-1}(P)\cap \widetilde{T}$ is a base point of the ample line bundle
$\mathcal{O}_X(m\tilde{S} + \phi^*(Q+P))$ on $X$.

For the final statement, recall that the canonical divisor satisfies $K_{X}=(p^nl-l-p^n-1) \tilde{S} + \phi^{*} (p^n+l)N$.
\end{proof}

\begin{rmk}\label{remark}
To apply Theorem \ref{main}, we provide a constructive method to find a suitable line bundle $\mathcal {L}_0$ on an $n$-Tango curve  with an associated vector bundle $\mathcal{E}$.

Let $C$ be an $n$-Tango curve, $\mathcal{E}$ an associated vector bundle on it, and $\psi:X\stackrel{l:1}\rightarrow\mathbb{P}(\mathcal {E})$ an $n$-Raynaud surface, with notations as  in the previous section. First, by $\mathcal{O}_{C}\subset \mathcal {E}\subset F_*^{n}\mathcal {O}_C$ we see that $H^0(C,\mathcal {E})=H^0(C,\mathcal{O}_{C})=\mathbf{k}$.

As described in subsection \ref{n-Tango} , $\mathcal {E}|_{U_i}=\mathcal {O}_{U_i}\cdot 1\oplus\mathcal {O}_{U_i}\cdot \sqrt[p^n]{z_i}$ and the transition matrix of $\mathcal {E}$ is $\left(
                                                                                                           \begin{array}{cc}
                                                                                                             1 & \beta \\
                                                                                                             0 & \alpha \\
                                                                                                           \end{array}
                                                                                                         \right)\in GL(2,\mathcal {O}_{U_1\cap U_2})$ .
Let $(\beta)=\sum\limits_{D_j\notin U_1}a_jD_j+\texttt{others}$, $(\alpha)=\sum\limits_{D_j\notin U_1}b_jD_j+\texttt{others}$ and define $D_0=-\sum\limits_{D_j\notin U_1}\min\{a_j,b_j,0\}D_j$>0.
Suppose the divisor $D_0\in \Gamma(X,\mathcal {O}(D_0))$ is locally defined by the regular functions  $1\in\Gamma(U_1,\mathcal {O}_{U_1}) $ and $\gamma\in\Gamma(U_2,\mathcal {O}_{U_2})$.
Then  $\mathcal {O}(D_0)|_{U_1}=\mathcal {O}_{U_1}\cdot 1$ and $\mathcal {O}(D_0)|_{U_2}=\mathcal {O}_{U_2}\cdot\frac{1}{\gamma}$, and thus $\mathcal {\mathcal {L}}(D_0)|_{U_1}=\mathcal {O}_{U_1}\cdot\eta_1$ and $\mathcal {\mathcal {L}}(D_0)|_{U_2}=\mathcal {O}_{U_2}\cdot\frac{1}{\gamma}\eta_2$, where $\eta_i$ are local bases of $\mathcal{L}$ as in subsection \ref{n-Tango}.
 Therefore, there exists a section $s^{\prime}\in \Gamma(X,\mathcal {\mathcal {L}}(D_0))$,  locally expressed as $s^{\prime}|_{U_1}=1\cdot\eta_1$ and $s^{\prime}|_{U_2}=\gamma \alpha\cdot \frac{1}{\gamma}\eta_2$,
  since $\gamma \alpha \in \Gamma(U_2,\mathcal {O}_{U_2})$ by the construction of $D_0$. Moreover, this section   lifts to a section $s\in \Gamma(X,\mathcal {E}(D_0))$, locally given by $s|_{U_1}=1\cdot \sqrt[p^n]{z_1}$ and $s|_{U_2}=\gamma \alpha\cdot \frac{1}{\gamma}\sqrt[p^n]{z_2}+\gamma\beta\cdot\frac{1}{\gamma}$, since $\gamma \alpha, \gamma \beta \in \Gamma(U_2,\mathcal {O}_{U_2})$ by the construction of $D_0$.
Consequently, $\dim H^0(C,\mathcal {E}^{\vee}\otimes\mathcal{L}(D_0))=\dim H^0(C,\mathcal {E}(D_0))\geq\dim H^0(C,\mathcal{O}_{C}(D_0))+1\geq2$.
Moreover,  by the construction of $D_0$,   $(\gamma \alpha, \gamma\beta)=1$ in the local
ring $\mathcal {O}_{C,x}$ for any closed point $x\notin U_1$. Thus, the inclusion $\mathcal{O}_{C}(-D_0)\hookrightarrow \mathcal {E} $ defined by $s$ is saturated, yielding  a quotient
$\sigma_0:\mathcal{E}\twoheadrightarrow  \mathcal{L}(D_0)$.
%Next, in order to obtain such an $\mathcal {L}_0:=\mathcal{L}(D_0)$ that also satisfies (2) or (2*) in Theorem \ref{main}, we should find such a divisor $D_0$ of degree as small as possible by chosen of $U_1$.
\end{rmk}
To conclude this section, we revisit  Example \ref{Tangocurve}  to provide a concise proof of the strong non-freeness of adjoint bundles on $n$-Raynaud surfaces.
\begin{cor}(cf. \cite[Theorem 1.2] {GZZ}\label{fujita})
For any integer $r>0$, there exists a smooth projective surface $X$ with an ample divisor $A$   such that the adjoint linear system
$$|K_X+r A |$$ has base points.
\end{cor}
\begin{proof}
For simplicity,  set $Q(X,Y)=Y^e$ in Example \ref{Tangocurve}. Then
 $\alpha=y^{e(qe-3)}$ and $\beta=Q^{-1}(x,y)=y^{-e}$. Note that $C\setminus U_1=\{\infty\}$. By the construction in Remark \ref{remark}, define $D_0=e\infty$ and $\mathcal{L}_0=\mathcal{L}(D_0)$, then there is a surjective morphism $\sigma_0:\mathcal{E}\twoheadrightarrow \mathcal {L}_0$  satisfying the condition (1) of Theorem \ref{main}. To facilitate calculations, let
 $l=p^n+1$ and $e=kl$ for some integer $k$. Then $\omega_C\otimes {\mathcal {L}_0^{-(p^n-1-d)}(-(p^n+l)N)}=\mathcal{O}(k(q(q+1)k-3-(q-2)(q+1))\infty)$. For any $r>0$, choose $n\gg0$ such that
 $l=p^n+1>r$ and   $k\gg0$ such that $k(q(q+1)k-3-(q-2)(q+1))>r$. Set $Q=(r-1)\infty$; then  $H^0(C,\omega_C\otimes {\mathcal {L}_0^{-(p^n-1-d)}(-(p^n+l)N-Q)})=H^0(C,(k(q(q+1)k-3-(q-2)(q+1))-r+1)\infty)\neq 0$.
 By Theorem \ref{main}, there exists a nonempty open subset $C_0\subset C$ such that for any point
 $P\in C_0$ $\phi^{-1}(P)\cap \widetilde{T}$ is a base point of the adjoint line bundle $\mathcal{O}_X(K_X + r\tilde{S} + \phi^*(Q+P))$. Writing $Q+P=rQ_0$ for some divisor $Q_0$ of degree $1$ on $C$ and letting $A=\tilde{S} + \phi^*(Q_0)$ which is ample, the conclusion follows.
\end{proof}

\section{Strong Kodaira non-vanishing}
Although the Kodaira vanishing theorem fails in positive characteristic, the following weaker analogue of the vanishing theorem remains valid.
\begin{thm}(cf. \cite[Proposition 2.1]{S79})
Let $\mathcal{H}$ be a nef and big line bundle on a smooth projective surface $X$ over $\mathbf{k}$, then $H^1(X,\mathcal{H}^{-n})=0$ for all $n\gg0$.
\end{thm}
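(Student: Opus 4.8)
The plan is to argue by contradiction using intersection theory on the surface, so that the characteristic and the specific Frobenius exponents $p^m$ play essentially no role. Suppose that $H^0(X,\mathcal{H}^{-p^m})\neq 0$ for some $m$ and choose a nonzero section. Its zero scheme is an effective divisor $D\geq 0$ whose associated line bundle is $\mathcal{H}^{-p^m}$; writing $H$ for a Cartier divisor with $\mathcal{O}_X(H)\simeq\mathcal{H}$, this says precisely that $D$ is linearly equivalent to $-p^mH$, and in particular $D=\sum_j a_j C_j$ is a nonnegative combination of irreducible curves $C_j$ with $a_j>0$.

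The key step is to pair this effective class against the nef class $H$. On the one hand, since $\mathcal{H}$ is nef, each intersection number $H\cdot C_j$ is nonnegative, so $H\cdot D=\sum_j a_j\,(H\cdot C_j)\geq 0$. On the other hand, by linear equivalence $H\cdot D=H\cdot(-p^mH)=-p^m\,(\mathcal{H}^2)$. Because $\mathcal{H}$ is nef and big on a projective surface, its self-intersection is strictly positive, $\mathcal{H}^2>0$; this is the standard characterization of bigness for a nef divisor, where the volume coincides with the top self-intersection number and is valid in every characteristic. Hence $H\cdot D=-p^m\,\mathcal{H}^2<0$, contradicting $H\cdot D\geq 0$. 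Therefore no nonzero section can exist and $H^0(X,\mathcal{H}^{-p^m})=0$.

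I expect no serious obstacle: the argument in fact yields the stronger vanishing $H^0(X,\mathcal{H}^{-N})=0$ for every integer $N\geq 1$, not merely for the Frobenius powers $N=p^m$ and not merely for $m\gg 0$, so the quoted conclusion is a weak special case. The only point genuinely requiring justification is the positivity $\mathcal{H}^2>0$, for which one invokes the equivalence between bigness of a nef line bundle on a surface and positivity of its self-intersection; once this is granted, nefness supplies the contradiction through the elementary fact that a nef class pairs nonnegatively with any effective divisor. It is worth noting that an approach through asymptotic Riemann--Roch alone would not suffice here, since for $m\gg 0$ the Euler characteristic $\chi(X,\mathcal{H}^{-p^m})$ is dominated by $h^2=h^0(X,\omega_X\otimes\mathcal{H}^{p^m})$ rather than by $h^0$, so the intersection-theoretic argument is the natural one.
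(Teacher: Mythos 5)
Your argument is internally correct: a nonzero section of $\mathcal{H}^{-N}$ for $N\geq 1$ gives an effective divisor linearly equivalent to $-NH$, and pairing with the nef class $H$ (using $\mathcal{H}^2>0$ from nef-and-bigness) yields an immediate contradiction. But the very observation you make at the end --- that this proves far more than claimed, namely $H^0(X,\mathcal{H}^{-N})=0$ for \emph{every} $N\geq 1$ in every characteristic --- should have been the signal that you are proving the wrong statement. The result quoted from Shepherd-Barron is not trivial; the $H^0$ in the statement is a typo for $H^1$. Corollary 17 of \cite{SB91} asserts that for a nef and big line bundle $\mathcal{H}$ on a smooth projective surface one has $H^1(X,\mathcal{H}^{-p^m})=0$ for $m\gg 0$, i.e.\ that failures of Kodaira vanishing in degree one are killed by sufficiently many Frobenius twists. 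That is the only reading consistent with the surrounding text: the next theorem in the paper (the ``strong non-vanishing'' Theorem \ref{strongnonvanishing}) claims the existence, for each $m$, of a surface and an ample $\mathcal{H}$ with nonvanishing cohomology of $\mathcal{H}^{-p^m}$, and its proof runs entirely through the Leray sequence for $H^1(X,\mathcal{H}^{-p^m})$. If the $H^0$ reading were the intended one, your elementary intersection-theoretic argument would outright refute that theorem.

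For the intended $H^1$ statement your approach says nothing: a class in $H^1(X,\mathcal{H}^{-1})$ is not an effective divisor, so there is nothing to pair against $H$. The genuine proof in \cite{SB91} goes through the theory of unstable rank-two vector bundles: a nonzero class in $H^1(X,\mathcal{H}^{-p^m})$ that survives pullback by Frobenius produces, via Bogomolov instability, a destabilizing subsheaf whose numerical invariants are incompatible with $\mathcal{H}$ being nef and big once $m$ is large. (The paper itself does not reprove this; it only cites it as the foil for its non-vanishing construction.) So the gap is not a flaw in your deductions but a misidentification of the claim: you have verified a vacuous special case and left the actual content of the theorem untouched.
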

However, The following strong non-vanishing theorem asserts that there is no universal bound for $n$.

\begin{thm}\label{strongnonvanishing}
For any integer $m>0$, there exists a smooth projective surface $X$ and an ample line bundle $\mathcal{H}$ on $X$ such that $H^1(X,\mathcal{H}^{-p^m})\neq0$
\end{thm}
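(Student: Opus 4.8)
The plan is to realize Theorem~\ref{strongnonvanishing} through the very construction of an $n$-Raynaud surface, choosing the Frobenius level $n$ strictly larger than the prescribed $m$ so that the Raynaud extension class survives exactly $m$ pullbacks by Frobenius. Given $m>0$, I would set $n=m+1$ and fix an $n$-Tango curve $C$ --- for instance the curve furnished by Example~\ref{Tangocurve}, which produces $n$-Tango data for every $n>0$ --- together with its associated rank-two bundle $\mathcal{E}$, the divisor $D=lN$, $\mathcal{L}=\mathcal{O}_C(D)$, and the resulting $n$-Raynaud surface $\psi\colon X\xrightarrow{l:1}\mathbb{P}(\mathcal{E})$ with $\phi=\pi\circ\psi\colon X\to C$. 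The indispensable input is the Frobenius computation recorded just before Remark~\ref{rmk1}: the extension class $0\neq\alpha(s)\in H^1(C,\mathcal{L}^{-1})$ of the sequence $(*)$ satisfies $F^{*n}(\alpha(s))=0$ while $F^{*(n-1)}(\alpha(s))\neq0$. With $n=m+1$ this says exactly that $F^{*m}(\alpha(s))\in H^1(C,\mathcal{L}^{-p^m})$ is nonzero, and this surviving curve class is the cohomological seed I intend to transport up to $X$.

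Next I would fix the ample line bundle $\mathcal{H}$ and verify ampleness. The candidate is one of the shape $\mathcal{H}=\mathcal{O}_X(a\tilde{S})\otimes\phi^*\mathcal{O}_C(Q_0)$ --- in the spirit of the ample divisor $A=\tilde{S}+\phi^*Q_0$ produced in the disproof of Fujita's conjecture above --- normalized so that $\mathcal{H}^{-1}$ is precisely the bundle whose first cohomology along $\phi$ receives $\alpha(s)$. Ampleness can be read off from the numerical data gathered in the Proposition listing the invariants of an $n$-Raynaud surface (where $\tilde{S}^2=(2g-2)/(p^nl)>0$ and $\omega_X$ is ample away from the two sporadic triples $(p,n,l)$). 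Those exceptional pairs both have $n=1$, so with $n=m+1\ge 2$ I am already outside the list, and I retain the freedom to enlarge $n$ or adjust $l$ if a further twist is needed.

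The heart of the argument is the transport together with the non-vanishing. I would compute $\phi_*\mathcal{H}^{-p^m}$ by first applying the $\psi$-pushforward decomposition of Lemma~\ref{lem_zheng} to $\mathcal{H}^{-p^m}$ on $\mathbb{P}(\mathcal{E})$ and then $\pi_*$ to $C$, isolating the direct summand isomorphic to a twist of $\mathcal{L}^{-p^m}$ whose first cohomology on $C$ contains the nonzero class $F^{*m}(\alpha(s))$. The Leray edge map then injects $H^1(C,\phi_*\mathcal{H}^{-p^m})$ into the first cohomology of $X$ with coefficients in $\mathcal{H}^{-p^m}$, carrying this class to a nonzero element; that is the strong non-vanishing asserted in Theorem~\ref{strongnonvanishing} (equivalently, by Serre duality, $H^1(X,\omega_X\otimes\mathcal{H}^{p^m})\neq0$, the failure of Kodaira vanishing at Frobenius level $m$). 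The Frobenius factorization diagram $(***)$ through $\mathbb{P}(\mathcal{E}^{(p^n)})$ is the tool that lets me follow the Frobenius action summand-by-summand and confirm that, because $m<n$, the seed lies off the kernel of $F^{*n}$ and so is not annihilated in the transport.

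The step I expect to be the main obstacle is precisely this bookkeeping: pinning the twist in $\mathcal{H}$ so that $\phi_*\mathcal{H}^{-p^m}$ contains $\mathcal{L}^{-p^m}$ in the correct summand of the $l$-cyclic decomposition, and then proving that the Leray edge map is injective on $F^{*m}(\alpha(s))$, i.e.\ that neither a neighbouring summand nor a spectral-sequence differential destroys it. The leverage for injectivity is the minimality built into the $n$-Tango condition --- $F^{*m}(\alpha(s))\neq0$ but $F^{*n}(\alpha(s))=0$ with $m<n$ --- so that the class genuinely persists at level $p^m$; aligning this with the torsion that is killed at the surface level, rather than at some higher or lower Frobenius level, is the delicate point. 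Once the edge map is shown injective on the seed, the nonzero class on $X$ is produced and the theorem follows; as in the weak vanishing of \cite{SB91} that it negates, this non-vanishing is borne by first cohomology.
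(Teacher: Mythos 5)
There is a genuine gap, and it sits at the very center of your argument: the Leray edge map you rely on has zero source. For any line bundle of the shape $\mathcal{H}=\mathcal{O}_X(a\tilde{S})\otimes\phi^*\mathcal{O}_C(Q_0)$, ampleness forces $a\geq 1$ (we have $\tilde{S}\cdot F=1$ on a fibre $F$ of $\phi$, so $\mathcal{H}\cdot F=a$ must be positive), and then $\phi_*\mathcal{H}^{-p^m}=0$ identically: writing $ap^m=lq+r$ with $0\leq r\leq l-1$, Lemma~\ref{lem_zheng}(1) gives $\psi_*\mathcal{O}_X(-ap^m\tilde{S})=\bigl(\bigoplus_{i=0}^{r-1}\mathcal{M}^{-i}(-(q+1)S)\bigr)\oplus\bigl(\bigoplus_{i=r}^{l-1}\mathcal{M}^{-i}(-qS)\bigr)$, and each summand is $\mathcal{O}_{\mathbb{P}(\mathcal{E})}(-id-q-\epsilon_i)\otimes\pi^*(\text{line bundle})$ with $\epsilon_i\in\{0,1\}$ and $-id-q-\epsilon_i<0$, so applying $\pi_*$ kills every summand, since $\pi_*\mathcal{O}_{\mathbb{P}(\mathcal{E})}(k)=\mathrm{Sym}^k(\mathcal{E})=0$ for $k<0$. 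Hence $H^1(C,\phi_*\mathcal{H}^{-p^m})=0$: there is no direct summand ``a twist of $\mathcal{L}^{-p^m}$'' inside $\phi_*\mathcal{H}^{-p^m}$ to house $F^{*m}(\alpha(s))$, and no normalization of $a$ can create one without destroying ampleness. The paper's proof lives entirely in the \emph{other} Leray term: $H^1(X,\mathcal{H}^{-p^m})\cong H^0(C,R^1\phi_*\mathcal{H}^{-p^m})$, with $R^1\phi_*$ computed by relative Serre duality, $(R^1\phi_*\mathcal{H}^{-p^m})^{\vee}\simeq\phi_*(\mathcal{H}^{p^m}\otimes\omega_{X/C})$; the $i=0$ summand admits the quotient $\mathcal{O}(p^mQ+(lq-1)N)$ (coming from $\mathrm{Sym}(\mathcal{E})\twoheadrightarrow\mathcal{L}^{\bullet}$), dually a line subbundle $\mathcal{O}((1-lq)N-p^mQ)\hookrightarrow R^1\phi_*\mathcal{H}^{-p^m}$, and the nonvanishing is then a pure degree count once $q=0$ and $\deg N>p^m\deg Q$. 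The Frobenius survival $F^{*m}(\alpha(s))\neq 0$ --- your seed --- carries no weight in this count; the minimality of $n$ is spent in constructing the disjoint section $T$ and hence the cyclic cover, and, exactly as the introduction announces, the strong non-vanishing depends only on the degree of $\mathcal{L}$ (equivalently of $N$), not on where the extension class dies under Frobenius.

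Your numerics are also incompatible with the correct mechanism. To force $q=0$ in $p^m=lq+r$ one needs $l>p^m$ while keeping $l\mid p^n+1$; the paper takes $l=p^m+1$, which requires $p^m+1\mid p^n+1$, i.e.\ $n=mm_0$ with $m_0$ odd --- your choice $n=m+1$ fails this in general (already $p=2$, $m=1$: $3\nmid 5$). And you never impose the essential positivity $\deg N>p^m$, which is what actually produces the section; the paper secures it by running Example~\ref{Tangocurve} with $e=l$ and $Q=\infty$. One reading note in your favour: the statement's $H^0(X,\mathcal{H}^{-p^m})$ is evidently a typo for $H^1$ ($H^0$ of a negative power of an ample bundle vanishes trivially, and the quoted Shepherd--Barron theorem concerns $H^1$), so your Serre-duality reformulation targets the right group; it is the route you propose to reach it that does not exist.
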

\begin{proof}
In the proof, we adopt the notations from section \ref{preliminary}. Let $X$ be an $n$-Raynaud surface over an $n$-Tango curve $C$, with morphisms $$\phi:X\stackrel{\psi}\longrightarrow\mathbb{P}(\mathcal {E})\stackrel{\pi}\longrightarrow C$$
and consider the ample line bundle $\mathcal{H}=\mathcal{O}(\tilde{S} + \phi^*Q)$ where $Q$ is a divisor on $C$ of positive degree.

The Leray spectral sequence $E^{p,q}_{2}=H^p(C,R^q\phi_*\mathcal{H}^{-p^m})\Rightarrow H^{p+q}(X,\mathcal{H}^{-p^m})$, yields the following exact sequence
$$0\rightarrow  H^1(C,\phi_*\mathcal{H}^{-p^m})\rightarrow H^1(X, \mathcal{H}^{-p^m}) \rightarrow H^0(C,R^1\phi_*\mathcal{H}^{-p^m}) \rightarrow H^2(C,\phi_*\mathcal{H}^{-p^m})=0.$$
Writing $p^m=lq+r$ with $0\leq r\leq l-1$, Lemma \ref{lem_zheng} implies
\begin{align*}
\phi_*\mathcal{H}^{-p^m}&=\pi_*\psi_*\mathcal{O}_X(-p^m(\tilde{S}+\phi^*Q)) \\
&\cong \left(\bigoplus_{i=0}^{r-1} \pi_*\mathcal {M}^{-i}((-q-1)S)\otimes \mathcal {O}(-p^mQ)\right) \oplus\left(\bigoplus_{i=r}^{l-1} \pi_*\mathcal {M}^{-i}(-qS)\otimes \mathcal {O}(-p^mQ)\right) \\
&\cong \left(\bigoplus_{i=0}^{r-1} \emph{Sym}^{-id-q-1}(\mathcal{E})\otimes \mathcal {O}(ip^nN-p^mQ)\right) \oplus\left(\bigoplus_{i=r}^{l-1} \emph{Sym}^{-id-q}(\mathcal{E})\otimes \mathcal {O}(ip^nN-p^mQ)\right)=0
\end{align*}
since the exponents of symmetric powers are negative.
Thus, the leftmost term $H^1(C,\phi_*\mathcal{H}^{-p^m})=0$, reducing the problem to computing $H^0(C,R^1\phi_*\mathcal{H}^{-p^m})$.

Note that
$$\omega_{X/C}=\mathcal{O}_{X}((p^nl-l-p^n-1) \tilde{S}) \otimes \phi^{*} \mathcal {O}_{C}((p^n+l-p^nl)N),$$
then by the relative Serre duality, we have
\begin{align*}
(R^1\phi_*\mathcal{H}^{-p^m})^{\vee}
&\simeq \phi_*(\mathcal{H}^{p^m}\otimes \omega_{X/C})\\
&\simeq \pi_*\psi_*\mathcal{O}_X((p^m+p^nl-l-p^n-1)\tilde{S}+\phi^*(p^mQ+(p^n+l-p^nl)N))\\
&\simeq (\bigoplus_{i=0}^{l-r-1} \pi_*\mathcal {M}^{-i}((p^n-d+q-1)S)\otimes \mathcal {O}(p^mQ+(p^n+l-p^nl)N))\oplus\\
&\quad\ (\bigoplus_{i=l-r}^{l-1} \pi_*\mathcal {M}^{-i}((p^n-d+q)S)\otimes \mathcal {O}(p^mQ+(p^n+l-p^nl)N))\\
&\simeq (\bigoplus_{i=0}^{l-r-1} \emph{Sym}^{-id+p^n-d+q-1}(\mathcal{E})\otimes \mathcal {O}(p^mQ+(p^n+l+ip^n-p^nl)N))\oplus\\
&\quad\ (\bigoplus_{i=l-r}^{l-1} \emph{Sym}^{-id+p^n-d+q}(\mathcal{E})\otimes \mathcal {O}(p^mQ+(p^n+l+ip^n-p^nl)N)).
\end{align*}
For the first direct summand, there is a quotient
$$\emph{Sym}^{p^n-d+q-1}(\mathcal{E})\otimes \mathcal {O}(p^mQ+(p^n+l-p^nl)N))\twoheadrightarrow \mathcal {O}(p^mQ+(lq-1)N))$$
by the construction of $\mathcal{E}$.
  For any $m>0$, choose $n=mm_0$ for some odd integer $m_0>0$ and  set $l=p^m+1$. Then $l\mid p^n+1$ and $q=0$.
 Select an $n$-Tango curve $C$ with $\deg N>p^m$, then there exists a divisor $Q$ of degree $1$ such that $N-p^mQ>0$. Thus,
\[
H^0(C, R^1 \phi_* \mathcal{H}^{-p^m}) \supset H^0(C, \mathcal{O}_C(p^m Q + (lq - 1)N))^\vee \neq 0,
\]
implying \( H^1(X, \mathcal{H}^{-p^m}) \neq 0 \).
For instance, the $n$-Tango curves in example \ref{Tangocurve} satisfy those conditions by taking $e=l$ and $Q=\infty$.
\end{proof}

\begin{bibdiv}
\begin{biblist}
\bib{Fu85}{article}{
   author={Fujita, Takao},
   title={On polarized manifolds whose adjoint bundles are not semi-positive},
   conference={
      title={Algebraic geometry, Sendai, 1985},
   },
   book={
      series={Adv. Stud. Pure Math.},
      volume={10},
      publisher={North-Holland, Amsterdam},
   },
   date={1987},
   pages={167--178},
   review={\MR{946238}},
}
\bib{G71}{article}{
   author={Gieseker, David},
   title={$p$-ample bundles and their Chern classes},
   journal={Nagoya Math. J.},
   volume={43},
   date={1971},
   pages={91--116},
   issn={0027-7630},
   review={\MR{296078}},
}
\bib{GZZ}{article}{
   author={Gu, Yi},
   author={Zhang, Lei},
   author={Zhang, Yongming},
   title={Counterexamples to Fujita's conjecture on surfaces in positive
   characteristic},
   journal={Adv. Math.},
   volume={400},
   date={2022},
   pages={Paper No. 108271, 17},
   issn={0001-8708},
   review={\MR{4386546}},
   doi={10.1016/j.aim.2022.108271},
}
\bib{Mu13}{article}{
   author={Mukai, Shigeru},
   title={Counterexamples to Kodaira's vanishing and Yau's inequality in
   positive characteristics},
   journal={Kyoto J. Math.},
   volume={53},
   date={2013},
   number={2},
   pages={515--532},
   issn={2156-2261},
   review={\MR{3079312}},
   doi={10.1215/21562261-2081279},
}
\bib{Ra78}{article}{
   author={Raynaud, Michel},
   title={Contre-exemple au ``vanishing theorem'' en caract\'{e}ristique
   $p>0$},
   language={French},
   conference={
      title={C. P. Ramanujam---a tribute},
   },
   book={
      series={Tata Inst. Fund. Res. Studies in Math.},
      volume={8},
      publisher={Springer, Berlin-New York},
   },
   date={1978},
   pages={273--278},
   review={\MR{541027}},
}
\bib{S79}{article}{
   author={Szpiro, L.},
   title={Sur le th\'eor\`eme de rigidit\'e{} de Parsin et Arakelov},
   language={French},
   conference={
      title={Journ\'ees de G\'eom\'etrie Alg\'ebrique de Rennes},
      address={Rennes},
      date={1978},
   },
   book={
      series={Ast\'erisque},
      volume={64},
      publisher={Soc. Math. France, Paris},
   },
   date={1979},
   pages={169--202},
   review={\MR{0563470}},
}
\bib{Tango72}{article}{
   author={Tango, Hiroshi},
   title={On the behavior of extensions of vector bundles under the
   Frobenius map},
   journal={Nagoya Math. J.},
   volume={48},
   date={1972},
   pages={73--89},
   issn={0027-7630},
   review={\MR{314851}},
}
\bib{Zhe17}{article}{
   author={Zheng, Xudong},
   title={Counterexamples of Kodaira vanishing for smooth surfaces of
   general type in positive characteristic},
   journal={J. Pure Appl. Algebra},
   volume={221},
   date={2017},
   number={10},
   pages={2431--2444},
   issn={0022-4049},
   review={\MR{3646309}},
   doi={10.1016/j.jpaa.2016.12.030},
}
\end{biblist}
\end{bibdiv}
\end{document}